\newcommand{\bu}{\boldsymbol u}
\newcommand{\bv}{\boldsymbol v}
\newcommand{\bV}{\boldsymbol V}
\newcommand{\bw}{\boldsymbol w}
\newcommand{\bbeta}{\boldsymbol  \eta}
\newcommand{\bz}{\boldsymbol z}
\newcommand{\bx}{\boldsymbol x}
\newcommand{\bn}{\boldsymbol n}
\newcommand{\by}{\boldsymbol y}
\newcommand{\be}{\boldsymbol e}
\newcommand{\bvar}{\boldsymbol \varphi}
\newcommand{\bU}{\boldsymbol U}
\newcommand{\bs}{\boldsymbol s}
\newcommand{\bff}{\boldsymbol f}
\newtheorem{remark}[theorem]{Remark}
\newcommand{\linenomathpatch}[1]{%
  \cspreto{#1}{\linenomath}%
  \cspreto{#1*}{\linenomath}%
  \csappto{end#1}{\endlinenomath}%
  \csappto{end#1*}{\endlinenomath}%
}
\title{POD-ROMs for incompressible flows including snapshots of the temporal derivative of the 
full order solution}
\author{Bosco
Garc\'{\i}a-Archilla\thanks{Departamento de Matem\'atica Aplicada
II, Universidad de Sevilla, Sevilla, Spain, (\email{bosco@esi.us.es}), \funding{The research of Bosco
Garc\'{\i}a-Archilla has been supported by
Spanish MCINYU under grants PGC2018-096265-B-I00 and PID2019-104141GB-I00.}}
\and
Volker John\thanks{Weierstrass Institute for Applied Analysis and Stochastics,
Leibniz Institute in Forschungsverbund Berlin e. V. (WIAS), Mohrenstr. 39, 10117 Berlin, Germany.
Freie Universit\"at of Berlin,
Department of Mathematics and Computer Science,
Arnimallee 6, 14195 Berlin, Germany, (\email{john@wias-berlin.de}).}
  \and Julia Novo\thanks{Departamento de
Matem\'aticas, Universidad Aut\'onoma de Madrid, Spain,  (\email{julia.novo@uam.es}), 
\funding{The research of Julia Novo has been supported by Spanish MINECO
under grants PID2019-104141GB-I00 and VA169P20.}}}
\begin{document}
\nolinenumbers

\maketitle

\begin{abstract}
 In this paper we study the influence of including snapshots that approach the velocity time derivative in the numerical
 approximation of the incompressible Navier--Stokes equations by means of proper orthogonal decomposition (POD) methods.  
 Our set of snapshots includes the velocity approximation at the initial time from a full order mixed finite element
 method (FOM) together with approximations to the time derivative at different times. The approximation 
 at the initial velocity can be replaced by the mean value of the velocities at the different times so 
 that implementing the method to the fluctuations, as done mostly in practice, only approximations
 to the time derivatives are included in the set of snapshots. For the POD method we
 study the differences between projecting onto $L^2$ and $H^1$. In both cases, pointwise in time error bounds
 are proved. Including grad-div stabilization both in the FOM and POD methods, error bounds with constants independent of inverse powers of the viscosity are obtained. 
 \end{abstract}

\begin{keyword}
incompressible Navier--Stokes equations; proper orthogonal decomposition (POD);
reduced order models (ROMs); snapshots of the temporal derivative of the full order solution; grad-div stabilization; robust pointwise in time estimates
\end{keyword}

\begin{AMS}
65M12, 65M15, 65M60
\end{AMS}

\section{Introduction}

It is well known that the computational cost of direct numerical simulations, also called full order methods (FOMs), can be reduced by using reduced order models (ROMs). 
In this paper, we study ROMs based  on proper orthogonal decomposition (POD) methods, so-called POD-ROMs. 
The computation of the reduced basis uses solutions of a FOM, so-called snapshots.

We study incompressible flow problems that are modeled by means of the 
incompressible Navier--Stokes equations
\begin{equation}
\begin{array}{rcll}
\label{NS} \bu_t -\nu \Delta \bu + (\bu\cdot\nabla)\bu + \nabla p &=& \bff &\text{in }\ (0,T]\times\Omega, \\
\nabla \cdot \bu &=&0&\text{in }\ (0,T]\times\Omega,
\end{array}
\end{equation}
in a bounded domain $\Omega \subset {\mathbb R}^d$, $d \in \{2,3\}$, with initial condition $\bu(0)=\bu^0$. In~\eqref{NS},
$\bu$ is the velocity field, $p$ the kinematic pressure, $\nu>0$ the kinematic viscosity coefficient,
 and $\bff$ represents the accelerations due to external body forces acting
on the fluid. The Navier--Stokes equations \eqref{NS} have to be complemented with boundary conditions. For simplicity,
we only consider homogeneous
Dirichlet boundary conditions $\bu = \boldsymbol 0$ on $[0,T]\times \partial \Omega$.

This paper studies the impact of 
including  approximations of the temporal derivative of the velocity in the set of snapshots.
The idea consists in taking, in addition to the  mixed finite element approximation to the velocity at the initial time, $\left\{\bu_h(t_0)\right\}$,
the time derivatives of the mixed finite element approximations, $\left\{\bu_{h,t}(t_j)\right\}$. These temporal derivatives can be easily computed using the right-hand side of the mixed finite element Galerkin equation {(see Remark~\ref{comp_ut}
below)}.
In the present paper we follow an idea presented in the recent paper \cite{locke_singler} in which it is shown
that there is no need to include in the set of snapshots other than one approximation to the velocity at a fixed time, instead
of the full set $\left\{\bu_{h}(t_j)\right\}_{j=0}^M$ as it is usually done in the literature. The numerical analysis in \cite{locke_singler}
is carried out for the heat equation and with difference quotients, $\left\{(\bu_h(t_j)-\bu_h(t_{j-1})/(t_j-t_{j-1})\right\}$, approaching the time derivative. In the present paper
we consider instead the Galerkin time derivatives although the analysis for the difference quotients case is essentially the same (or even simpler). Actually, in practice, any approximation to the time derivative can work equally well. We also prove that the snapshot at the initial value can be replaced by the mean value $\overline\bu_h=(M+1)^{-1}\sum_{j=0}^M\bu_h(t_j)$, which
can be more efficient in the numerical simulations. It is standard to apply the POD method to the fluctuations $\left\{\by_h(t_j)\right\}_{j=0}^M=\left\{\bu_{h}(t_j)-\overline\bu_h\right\}_{j=0}^M$ that have vanishing mean by definition.  Then, with the method
we propose only approximations to the derivatives are needed in this case.

Several works in the literature have already studied
the subject of increasing the set of snapshots with approximations to the time derivatives. 
However, apart from \cite{locke_singler}, all of them include as snapshots the set of the approximations at different times, instead
of only one snapshot. Also, starting with the pioneering paper \cite{kunisch}, most of these papers include a different 
type of approximation than considered in this paper, namely difference quotients. In particular, to the best of our knowledge, 
this is the first paper that studies the inclusion of the temporal derivatives of the
mixed finite element velocity approximations in order to generate  the reduced order basis for the 
incompressible Navier--Stokes equations. With this more general setting we can deduce that in practice
any approximation to the time derivative produces essentially the same results.

The initial motivation for investigating a different approximation than difference quotients in the set of snapshots
is that the results for difference quotients are ambivalent. 
On the one hand, from the theoretical point of view, the inclusion of the difference quotients possesses
some advantages. First of all, it allows to prove optimal error bounds for the POD-ROM when 
the POD basis functions are based on the projection onto the Hilbert space $X=H_0^1(\Omega)^d$, see 
\cite{kunisch,iliescu_et_al_q,singler}. In this way, the standard finite element error analysis 
is mimicked, in which the Ritz or Stokes projection is used to split the error in a projection error 
and a discrete remainder. It was observed that if the POD basis functions
are based on the projection onto the Hilbert space $X=L^2(\Omega)^d$, the difference quotients are not needed to prove optimal error bounds in certain norms, see  
\cite{chapelle,iliescu_et_al_q,singler,novo_rubino}. However, as pointed out in \cite{koc_rubino_et_al}, even in this case, the
inclusion of the difference quotients allows to get pointwise estimates in time that generally 
cannot be proved if there are no difference quotients in the set of snapshots. On the other hand, from the numerical point of view, it is not clear that the difference quotients should be included in the actual 
simulations with the POD-ROM. In fact, it is reported in  \cite{kean_sch,nos_pos_supg} that the POD-ROM 
without the difference quotients performs considerably better than with the difference quotients. 

Trying to keep the theoretical advantages of including approximations of the temporal derivative of the 
velocity in the set of snapshots but relaxing their drawbacks in practical simulations, we study in this paper, both theoretically and numerically, the 
inclusion of time derivatives of the discrete velocity. As in \cite{locke_singler}, our approach has half the number of snapshots as
in the standard POD finite quotient approach. Also, as in the difference quotient case, we are able to get 
pointwise in time estimates using as projecting space $X=H_0^1(\Omega)^d$ and $L^2(\Omega)^d$. 
To this end, we follow \cite[Lemma~3.3]{locke_singler} (see also \cite[Lemma 3.6]{koc_rubino_et_al}) 
to prove that the $X$-norm of a function at any point in time (let's say $t_j$) is bounded in terms of the $X$-norm  of the value
at $t_0$ plus the mean values of its time derivative taken in a time
interval (let's say $\left\{t_0,t_1,\ldots,t_M\right\}$, up to an error that tends to zero with the length
of the time step. From the numerical point of view, including the snapshots of the time derivatives avoids the potential problem of performing badly conditioned operations in the computation of the snapshots like
$\left\{(\bu_h(t_j)-\bu_h(t_{j-1}))/(t_j-t_{j-1})\right\}$ because both numerator and denominator may suffer from numerical cancellation. Finally, we mimic model reduction ideas coming from dynamical systems, 
where using snapshots from the time derivative is a more common approach, see for example \cite{kostova}.

For the error analysis in the present paper, instead of considering a concrete fully discrete scheme from which 
the values $\left\{\bu_h(t_j)\right\}$ are taken, we consider a continuous-in-time method, which has 
some advantages. In practice, one always computes the snapshots with a fully discrete method but the error
analysis based on the continuous-in-time method holds for any time integrator used in the FOM. With this approach 
one can use different time steps for the FOM, which 
produces the snapshots, and for the POD-ROM method. Our error analysis takes into account the temporal error coming from the POD-ROM.
%
{The error analysis of the present paper can be easily adapted to include the case in which the snapshots are 
computed with a fully discrete method. Errors with respect the fully discrete approximations can be expressed in terms of errors with respect the continuous-in-time approximation (estimated in the present paper) and further terms depending on the error of the fully discrete approximation with respect to the continuous-in-time one, for which there are already estimates in the literature.}

Finally, following \cite{novo_rubino}, we analyze the case in which stabilized approximations are computed both for the FOM
and the POD-ROM. More precisely, the considered finite element method is based on a Galerkin discretization plus 
grad-div stabilization with pairs of inf-sup stable elements,   {and, as in references \cite{novo_rubino,pod_da_nos},} for the POD-ROM we also use grad-div stabilization. In 
this way, the constants in the error bounds for the snapshots do not depend explicitly on inverse powers of the viscosity, i.e., they do not blow up for small viscosity coefficients, see \cite{NS_grad_div}.
Adapting the results from \cite{novo_rubino}, the same holds for the error bounds of the POD-ROM. 
The importance of such so-called robust methods is stated in the survey \cite{GJN21}: \textit{In the case of small
viscosity coefficients and coarse grids, only robust estimates provide useful information about the behavior of a
numerical method on coarse grids if the analytic solution is smooth.} {In reference \cite{pod_da_nos} a POD scheme with data assimilation was analyzed. In the method of \cite{pod_da_nos}, grad-div stabilization was added to the POD method, but the plain Galerkin
method was used for the FOM so that the final error bounds depend on inverse powers of the viscosity.} {As opposed to the present paper, neither in reference \cite{novo_rubino} nor in \cite{pod_da_nos}, snapshots approaching the time derivative are included and no pointwise estimates are proved}.

In the numerical studies, we compare the different approaches obtained by taking $X=H_0^1(\Omega)^d$ and $X=L^2(\Omega)^d$ in combination with one of the following sets:
the set of snapshots at different times, the set of difference quotients, and the set 
of Galerkin time derivatives. We cannot deduce from these studies that any of the approaches
is much better than the other ones and the necessary comprehensive numerical studies are outside the 
scope of this paper, which is a rigorous numerical
analysis from which interesting properties and sharp bounds for the different methods can be deduced. 

The paper is organized as follows. In Section \ref{sec:PN} we state some preliminaries and notations. The POD method and
some a priori bounds for the projection of the FOM approximation onto the POD space are shown in Section \ref{sec:POD}. The analysis
of the POD method is included in Section \ref{sec:pod_rom} with a first subsection for the case $X=H_0^1(\Omega)^d$ and a second
one for the case $X=L^2(\Omega)^d$. As stated above, Section \ref{sec:num} is devoted to study the performance of the methods
with some numerical experiments. Finally, 
we have included an appendix in which we get robust bounds for the second time derivative of the FOM approximation in some norms,
since this time derivative appears in our bounds in the a priori error analysis.

\section{Preliminaries and notations}\label{sec:PN}

Standard symbols will be used for Lebesgue and Sobolev spaces, with the usual convention that 
$W^{s,2}(\Omega)= H^s(\Omega)$, $s\ge 1$. The inner product in $L^2(\Omega)^d$, $d\ge 1$, is denoted 
by $(\cdot,\cdot)$, {and the corresponding norm by $\|\cdot\|_0$}.

The following Sobolev imbeddings \cite{Adams} will be used in the analysis: For
$q \in [1, \infty)$, there exists a constant $C=C(\Omega, q)$ such
that
\begin{equation}\label{sob1}
\|v\|_{L^{q'}} \le C \| v\|_{W^{s,q}}, \,\,\quad
\frac{1}{q'}
\ge \frac{1}{q}-\frac{s}{d}>0,\quad q<\infty, \quad v \in
W^{s,q}(\Omega)^{d}.
\end{equation}
We will denote by $C_p$ the constant in the Poincar\'e inequality
\begin{equation}\label{poincare}
\|\bv\|_0\le C_p\|\nabla \bv\|_0,\quad \bv\in H_0^1(\Omega)^d.
\end{equation}
The following inequality can be found in  \cite[Remark~3.35]{John}
\begin{equation}\label{diver_vol}
\|\nabla \cdot \bv\|_0\le \|\nabla   \bv \|_0,\quad \bv\in H_0^1(\Omega)^d.
\end{equation}
Let us denote by ${\boldsymbol V} = H_0^1(\Omega)^d$ and 
$Q=L_0^2(\Omega)=\{q\in L^2(\Omega)\ \mid \ (q,1)=0\}$.

Let $\mathcal{T}_{h}=(\sigma_j^h,\phi_{j}^{h})_{j \in J_{h}}$, $h>0$, be a family of partitions of 
$\overline\Omega$, where $h$ denotes the maximum diameter of the mesh cells $\sigma_j^h\in \mathcal{T}_{h}$, 
and $\phi_j^h$ are the mappings from the reference simplex $\sigma_0$ onto $\sigma_j^h$.
We shall assume that the family of partitions is shape-regular and  quasi-uniform. 
On these partitions, we define the following finite element spaces
\begin{eqnarray}
Y_h^l&=& \left\{v_h\in C^0(\overline\Omega)\ \mid \ {v_h}_{\mid_K}\in {\Bbb P}_l(K),\ \forall\ K\in \mathcal T_h\right\}, \ l\ge 1,\quad {\boldsymbol Y}_h^l=\left(Y_h^l\right)^d,\nonumber\\
{\boldsymbol X}_h^l&=&{\boldsymbol Y}_h^l\cap H_0^1(\Omega)^d,  \quad
Q_h^l = Y_h^l\cap L_0^2(\Omega), \nonumber\\
{{\boldsymbol V}_h^l}&=&{\boldsymbol X}_h^l\cap \left\{ {\boldsymbol v}_{h} \in H_0^1(\Omega)^d \ \mid \
(q_{h}, \nabla\cdot{\boldsymbol v}_{h}) =0  \ \forall\ q_{h} \in Q_{h}^{l-1}
\right\},\quad l\geq 2.\label{eq:V}
\end{eqnarray}
The space ${\boldsymbol V}_h^l$ is the space of discretely divergence-free functions. 

Since the family of partitions is quasi-uniform, the following inverse
inequality holds for each {$\bv_{h} \in {\boldsymbol Y}_{h}^{l}$}, e.g., see \cite[Theorem~3.2.6]{Cia78},
\begin{equation}
\label{inv} | \bv_{h} |_{W^{m,p}(K)} \leq c_{\mathrm{inv}}
h_K^{n-m-d\left(\frac{1}{q}-\frac{1}{p}\right)}
|\bv_{h}|_{W^{n,q}(K)},
\end{equation}
where $0\leq n \leq m \leq 1$, $1\leq q \leq p \leq \infty$, and $h_K$
is the diameter of~$K \in \mathcal T_h$.

The analysis uses a modified Stokes projection  $\bs_h^m\ :\ {\boldsymbol V}\rightarrow {\boldsymbol V}_{h,l}$ that was introduced in \cite{grad-div1} and 
that is defined by 
\begin{equation}\label{stokespro_mod_def}
(\nabla \bs_h^m,\nabla \bvar_h)=(\nabla \bu,\nabla \bvar_h),\quad \forall\
\bvar_{h} \in {\bV_h^l}.
\end{equation}
This projection satisfies the following error bound, see \cite{grad-div1},
\begin{equation}
\|\bu-\bs_h^m\|_0+h\|\bu-\bs_h^m\|_1\le C\|\bu\|_j h^j,\quad
1\le j\le l+1.
\label{stokespro_mod}
\end{equation}
From \cite{chenSiam}, we also have
\begin{equation}
\|\nabla \bs_h^m\|_\infty\le C\|\nabla \bu\|_\infty \label{cotainfty1},
\end{equation}
and from \cite[Lemma~3.8]{bosco_titi_yo}
\begin{eqnarray}
\label{cota_sh_inf_mu}
\|\bs_h^m\|_\infty  & \le & C(\|\bu\|_{d-2}\|\bu\|_2)^{1/2},
\\
\label{la_cota_mu}
\|\nabla\bs_h^m\|_{L^{2d/(d-1)}} & \le &
 C\bigl(\|\bu\|_1\|\bu\|_2\bigr)^{1/2},
\end{eqnarray}
where all constants~$C$ in \eqref{cotainfty1} -- \eqref{la_cota_mu} are independent of~$\nu$.
{In the sequel, for simplicity, we will denote by $C(\bu,p)$ a generic constant depending
on some norms of the true velocity and pressure.}

We consider the  mixed finite element pair known as Hood--Taylor elements \cite{BF,hood0} $({\boldsymbol X}_h^l, Q_{h}^{l-1})$, $l \ge 2$.
For these elements a uniform inf-sup condition is satisfied (see \cite{BF}), that is, there exists a constant $\beta_{\rm is}>0$ independent of the mesh size $h$ such that
\begin{equation}\label{lbbh}
 \inf_{q_{h}\in Q_{h}^{l-1}}\sup_{\bv_{h}\in{\boldsymbol X}_h^l}
\frac{(q_{h},\nabla \cdot \bv_{h})}{{\|\nabla\bv_{h}\|_{0}}
\|q_{h}\|_0} \geq \beta_{\rm{is}}.
\end{equation}

As a direct method, or full order method,  
we consider a Galerkin method with grad-div stabilization. The semi-discrete method reads as follows:
Find $(\bu_h,p_h)\in {\boldsymbol X}_h^l\times Q_h^{l-1}$ such that 
\begin{eqnarray}\label{eq:gal_grad_div}
\left(\bu_{h,t},\bv_h\right)+\nu(\nabla \bu_h,\nabla \bv_h)+b_h(\bu_h,\bu_h,\bv_h)
\\
-(p_h,\nabla \cdot \bv_h)+
\mu(\nabla \cdot\bu_h,\nabla \cdot \bv_h) & = & ({\boldsymbol f},\bv_h) \quad \forall\ \bv_h\in {\boldsymbol X}_h^l,\nonumber\\
(\nabla \cdot \bu_h,q_h)&=&0 \quad \forall\ q_h\in Q_h^{l-1},\nonumber
\end{eqnarray}
where $\mu$ is the positive grad-div stabilization parameter { and
\[
b_h(\bu_h,\bu_h,\bv_h)=((\bu_h\cdot\nabla) \bu_h,\bv_h)+\frac{1}{2}((\nabla \cdot \bu_h)\bu_h,\bv_h).
\]
}
It is well-known that considering the discretely divergence-free space $\bV_h^l$, we can remove the pressure from \eqref{eq:gal_grad_div} since
$\bu_h\in \bV_h^l$ satisfies
\begin{equation}\label{eq:gal_grad_div2}
\left(\bu_{h,t},\bv_h\right)+\nu(\nabla \bu_h,\nabla \bv_h)+b_h(\bu_h,\bu_h,\bv_h)+
\mu(\nabla \cdot\bu_h,\nabla \cdot \bv_h)=({\boldsymbol f},\bv_h), \quad \forall\ \bv_h\in {\bV}_h^l.
\end{equation}
For this method the following bound holds, see \cite{NS_grad_div},
\begin{equation}\label{eq:cota_grad_div}
\|\bu(\cdot,t)-\bu_h(\cdot,t)\|_0+ h\|\bu(\cdot,t)-\bu_h(\cdot,t)\|_1\le C(\bu,p,l+1) h^{l},\quad t\in (0,T],
\end{equation}
where the constant $C(\bu,p,l+1)$ does not explicitly depend on inverse powers of $\nu$. {As
detailed in \cite{NS_grad_div} $C(\bu,p,l+1)$ depends both on $\mu$ and $\mu^{-1}$.  However, assuming as in \cite{NS_grad_div}
the grad-div parameter is independent of the mesh size, we can omit the dependence of the constant on $\mu$}.
Actually, only the first term on the left-hand side of \eqref{eq:cota_grad_div} is considered in 
\cite{NS_grad_div} 
but the estimate for the second term follows then from \eqref{stokespro_mod}
and the inverse inequality \eqref{inv}. Numerical studies presented in 
\cite{GJN21} show that the estimate from \cite{NS_grad_div} is sharp.
From the error analysis performed in \cite{NS_grad_div}, it can be seen that an optimal order for the 
error of the velocity gradient, in $L^2(0,T;L^2(\Omega)^d)$, 
is obtained with a constant that 
depends on $\nu^{-1}$, i.e., this estimate is not robust. 

{Finally we will use the standard notation $H^s(0,T;X)$ (see e.g. \cite[Section 5.9.2]{Evans}) for
$X$ being a Banach space.}

\begin{remark}\label{comp_ut}\rm {We briefly comment on the computation of the time derivatives from the Galerkin equations. 
Equipping ${\boldsymbol X}_h^l$ and $Q_h^l$ with a standard basis, the solution of 
\eqref{eq:gal_grad_div} can be represented with coefficient vectors $\underline{\bu}_h(t)$
and $\underline{p}_h(t)$, respectively. In terms of these vectors, equations~\eqref{eq:gal_grad_div} become
\begin{equation}
\label{system_nodes}
M_h\frac{\displaystyle \vphantom{\big|_{1}}d}{\displaystyle \vphantom{\big|_{1}}dt}\underline \bu_h -  B_h^T \underline p_h =F_h(\underline \bu_h),\quad
B_h\underline\bu_h =0,
\end{equation}
where $M_h$ is the mass matrix formed by the velocity
basis functions and $B_h$ is the discrete divergence matrix, 
(see e.g., \cite[\S~7.1]{John}). We notice that~\eqref{system_nodes} is a system of differential-algebraic equations (DAE) where approximation to its solution (once a compatible initial condition is specified) can be obtained at certain time levels (see e.g., \cite{BDF2} for details). Once the vectors of nodal values~$\underline\bu_h$ and~$\underline p_h$ are obtained for a given time level, the corresponding vector of nodal values of the time derivative $d\bu_h/dt$ can be obtained by finding the solution~$\underline x$ 
of the linear system $M_h \underline{x} = F_h(\underline \bu_h) + B_h^T\underline p_h$.}
\hspace*{\fill}$\Box$\end{remark}
\section{Proper orthogonal decomposition}\label{sec:POD}

We consider a POD method.
Let us fix $T>0$ and $M>0$ and take $\Delta t=T/M$. For $N=M+1$ we define the following space
\[
{\cal \bU}=\mbox{span}\left\{\by_h^1,\by_h^2,\ldots,\by_h^N\right\},
\]
with
\[
\by_h^1=\sqrt{N}\bu_h^0,\quad \by_h^j=\tau\bu_{h,t}^{j+1},\ j=2,\ldots,N,
\]
so that
{
\[
{\cal \bU}= 
\mbox{span}\left\{\sqrt{N}\bu_h^0,\tau\bu_{h,t}^1,\ldots,\tau\bu_{h,t}^M\right\},
\]
}
where we use the notation $\bu_h^j=\bu_h(\cdot,t_j)$ for the approximations at time instance $t_j=j\Delta t$ and $\bu_{h,t}^j=\bu_{h,t}(\cdot,t_j)$ are the 
snapshots of the temporal derivatives.  The factor $\tau$ in front of the temporal derivatives is a time scale and it makes the 
snapshots dimensionally correct, 
{i.e., all members of the set that defines $\boldsymbol U$ are of the 
same physical quantity (here, velocities). The factor~$\tau$ may also be used to alter the size of
the time derivatives relative to the initial velocity, which has an impact on the eigenvalues of the correlation matrix and on the POD basis.}
 {We have not explored this possibility in the numerical experiments of Section 5 and have chosen $\tau=T$ for simplicity}.
{In the sequel,} $d_v$ {denotes} the dimension of $\cal \bU$.

The correlation  matrix corresponding to the snapshots is given by ${K^v}=((k_{i,j}^v))\in {\mathbb R}^{N\times N}$,
with the entries 
\[
k_{i,j}^v=\frac{1}{N}\left(\by_h^i,\by_h^j\right)_X, \quad i,j=1,\ldots,N, 
\]
and $(\cdot,\cdot)_X$ is the inner product in $X$, which is either $L^2(\Omega)^d$ or $H_0^1(\Omega)^d$.
Following \cite{kunisch}, we denote by
$ \lambda_1\ge  \lambda_2,\ldots\ge \lambda_{d_v}>0$ the positive eigenvalues of \textcolor{red}{$K^v$} and by
$\bv_1,\ldots,\bv_{d_v}\in {\mathbb R}^{N}$  associated eigenvectors of Euclidean norm $1$.  
Then, the (orthonormal) POD basis functions of $\cal \bU$ are given by
\begin{equation}\label{lachi}
\bvar_k=\frac{1}{\sqrt{N}}\frac{1}{\sqrt{\lambda_k}}\sum_{j=1}^{N} v_k^j \by_h^j,
\end{equation}
where $v_k^j$ is the $j$-th component of the eigenvector $\bv_k$.
The following error estimate is known from \cite[Proposition~1]{kunisch}
\begin{eqnarray}\label{cota_ku}
\frac{1}{N}\sum_{j=1}^N\left\|\by_h^j-\sum_{k=1}^r(\by_h^j,\bvar_k)_X\bvar_k\right\|_{X}^2=\sum_{k=r+1}^{d_v}\lambda_k,
\end{eqnarray}
from which one can deduce
\begin{equation}
\left\|\bu_h^0-\sum_{k=1}^r(\bu_h^0,\bvar_k)_X\bvar_k\right\|_{X}^2+\frac{\tau^2}{M+1}\sum_{j=1}^M\left\|\bu_{h,t}^{j}-\sum_{k=1}^r(\bu_{h,t}^{j},\bvar_k)_X\bvar_k\right\|_{X}^2=\sum_{k=r+1}^{d_v}\lambda_k.\label{eq:cota_pod_deriv}
\end{equation}
{In the sequel, we will denote by
$ {\cal \bU}^r= \mbox{span}\{\bvar_1,\bvar_2,\ldots,\bvar_r\}$, $1\le r\le d_v,
$
and by $P_r^v\ : \  {\boldsymbol X}_h^l  \to {\cal \bU}^r$,  the $X$-orthogonal projection onto ${\cal \bU}^r$. 
Then \eqref{cota_ku} can be written as
\[
\frac{1}{N}\sum_{j=1}^N\left\|\by_h^j-P_r^v \by_h^j\right\|_{X}^2=\sum_{k=r+1}^{d_v}\lambda_k.
\]
A generalization of the above equality can be found in \cite[Lemma 2.2]{koc_rubino_et_al}. Let $W$ be a Hilbert space
with ${\cal \bU}\subset W$ and $R_r^v: W\rightarrow W$ be a bounded linear 
 projection onto ${\cal \bU}^r$ then
\begin{eqnarray}\label{bound_point}
\frac{1}{N}\sum_{j=1}^N\left\|\by_h^j-R_r^v \by_h^j\right\|_{W}^2=\sum_{k=r+1}^{d_v}\lambda_k\|\bvar_k-R_r^v\bvar_k\|_{W}^2.
\end{eqnarray}
We observe that \eqref{cota_ku} is \eqref{bound_point} in the case $R_r^v=P_r^v$ and $W=X$.
}

We will denote the mass matrix of the POD basis by  $M^v=((m_{i,j}^v))\in {\mathbb R}^{d_v\times d_v}$, where 
$m_{i,j}^v=( \bvar_j,\bvar_i)_X$. 
In the case $X=H_0^1(\Omega)^d$, for any $\bv \in {\cal \bU}$,
the following inverse inequality holds, see \cite[Lemma~2]{kunisch},
\begin{equation}\label{eq:inv_M}
\|\nabla \bv \|_0\le \sqrt{\|(M^v)^{-1}\|_2}\|\bv\|_0.
\end{equation}

The stiffness matrix of the POD basis is given by $S^v=((s_{i,j}^v))\in {\mathbb R}^{d_v\times d_v}$, with the entries
$s_{i,j}^v=(\nabla \bvar_j,\nabla \bvar_i)_X$.
If $X=L^2(\Omega)^d$, the following inequality holds for all $\bv \in {\cal \bU}$, 
see \cite[Lemma 2]{kunisch},
\begin{equation}\label{eq:inv_S}
\|\nabla \bv \|_0\le \sqrt{\|S^v\|_2}\|\bv\|_0.
\end{equation}

The following lemma will be the basis for proving 
pointwise in time estimates. We follow \cite[Lemma~3.3]{locke_singler} (see also \cite[Lemma 3.6]{koc_rubino_et_al}).

\begin{lemma}\label{le:our_etal}
Let $T>0$, $\Delta t=T/M$, $t^n=n\Delta t$, $n=0,1,\ldots M$, let $X$ be a {Banach} space, $\bz=\bz(t,\bx)\in H^2(0,T;X)$. Then, the following estimate holds
\begin{equation}\label{eq:zetast}
\max_{0\le k\le {M} }\|\bz^k\|_X^2 \le  {3}\|\bz^0\|_X^2+\frac{3 T^2}{M}\sum_{n=1}^M \| \bz_t^n\|_X^2
+\frac{4T}{3}(\Delta t)^2\int_0^T\|\bz_{tt}(s)\|_X^2\ ds,
\end{equation}
{where $\bz^n=\bz(t_n,\cdot)$, $\bz_t^n=\bz_t(t_n,\cdot)$}.
\end{lemma}

\begin{proof}
 For each $k$ we have
\begin{equation}\label{eq:fun}
\bz^k=\bz^0+\int_{t_0}^{t_k}\bz_t\ ds.
\end{equation}
Adding and subtracting terms leads to 
\begin{equation}\label{zeta_uno}
\bz^k=\bz^0+\int_{t_0}^{t_k}\bz_t\ ds=\bz^0+\sum_{{n=1}}^{k}\Delta t \bz_t^n+ \sum_{{n=1}}^{k} \int_{{t_{n-1}}}^{{t_{n}}}\left(\bz_t(s)-\bz_t(t_n)\right)\ ds.
\end{equation}
To bound the last term on the right-hand side, we first notice that for $s\in[t_{n-1},t_n]$
\[
\bz_t(t_n)-\bz_t(s) = \int_{s}^{t_n} \bz_{tt}(\sigma)\ d\sigma. 
\]
With the Cauchy--Schwarz inequality, it follows that 
\begin{eqnarray*}
\left\|\int_{{t_{n-1}}}^{{t_{n}}}
(\bz_t(s)-\bz_t(t_n))\ ds\right\|_X &\le&
 \int_{{t_{n-1}}}^{{t_{n}}}\left\|\bz_t(t_n)-\bz_t(s)\right\|_X\ ds
 \\
&\le& \int_{{t_{n-1}}}^{{t_{n}}}{(t_{n}-s)}^{1/2}  \left(\int_{{s}}^{{t_n}}\left\|\bz_{tt}(\sigma)\right\|_X^2\ d\sigma\right)^{1/2}
\ ds\\
&\le& \frac{2}{3}(\Delta t)^{3/2} \left(\int_{t_{{n-1}}}^{t_{{n}}}\left\|\bz_{tt}(s)\right\|_X^2\ ds\right)^{1/2}.
\end{eqnarray*}
Consequently, for the last term on the right-hand side of \eqref{zeta_uno}, we obtain, using 
the Cauchy--Schwarz inequality for sums, 
\begin{eqnarray*}
\left\| 
{\sum_{n=1}^{k} }\left( \int_{t_{n-1}}^{t_{n}}
(\bz_t(s)-\bz_t(t_n))\ ds\right) \right\|_X
&\le& {\sum_{n=1}^{k}} \left\|\int_{t_{n-1}}^{t_{n}}
(\bz_t(s)-\bz_t(t_n))\ ds\right\|_X
\\
&\le &
 \frac{2}{3}{\sum_{n=1}^{k}}\left[ (\Delta t)^{3/2} \left(\int_{t_{n-1}}^{t_{n}}\left\|\bz_{tt}(s)\right\|_X^2\  ds\right)^{1/2}\right]
 \\
 &\le &
 \frac{2}{3}\left({\sum_{n=1}^{k}}(\Delta t)^{3} \right)^{1/2}\left(\int_{t_0}^{t_{k}}\left\|\bz_{tt}(t)\right\|_X^2\ ds\right)^{1/2}
 \\
 &\le &
  \frac{2}{3}T^{1/2}\Delta t \left(\int_{t_0}^{t_{k}}\left\|\bz_{tt}(t)\right\|_X^2\ ds\right)^{1/2}.
\end{eqnarray*}
Taking norms in \eqref{zeta_uno} gives the estimate
\begin{eqnarray}\label{drei}
\|\bz^k\|_X &\le &\|\bz^0\|_X+{\sum_{n=1}^{k}}\Delta t \|\bz_t^n\|_X+\frac{2}{3}T^{1/2}\Delta t \left(\int_{t_0}^{t_{k}}\left\|\bz_{tt}(t)\right\|_X^2\ ds\right)^{1/2}\nonumber\\
&\le&\|\bz^0\|_X+T^{1/2}(\Delta t)^{1/2}\left(\sum_{n=1}^M \|\bz_t^n\|_X^2\right)^{1/2}+\frac{2}{3}T^{1/2}\Delta t \left(\int_{0}^{T}\left\|\bz_{tt}(t)\right\|_X^2\ ds\right)^{1/2},
\end{eqnarray} 
from which we conclude \eqref{eq:zetast}.
\end{proof}

The above lemma also holds true changing the initial value by the mean value.
\begin{lemma}\label{le:our_etal_mean}
With the assumptions of Lemma~\ref{le:our_etal}, it holds 
\begin{equation}\label{eq:zetast_mean}
\max_{0\le k\le N }\|\bz^k\|_X^2 \le  {3}\|\overline\bz\|_X^2+\frac{12 T^2}{M}\sum_{n=1}^M \| \bz_t^n\|_X^2+\frac{16T}{3}(\Delta t)^2\int_0^T\|\bz_{tt}(s)\|_X^2\ ds,
\end{equation}
where
$
\overline \bz=\frac{1}{M+1}\sum_{j=0}^M\bz^j$.
\end{lemma}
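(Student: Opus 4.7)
The natural strategy is to repeat the proof of Lemma~\ref{le:our_etal} but replace the appearance of $\bz^0$ by using the identity
\[
\bz^0 = \overline\bz - \frac{1}{M+1}\sum_{j=0}^M(\bz^j-\bz^0),
\]
which follows at once from the definition of $\overline\bz$. The naive approach would be to first bound $\|\bz^0\|_X^2$ by this expression and then plug into \eqref{eq:zetast}, but that nests two applications of $(a+b+c)^2\le 3(a^2+b^2+c^2)$ and inflates every constant by another factor of $3$. The cleaner route is to combine the two identities before taking norms, so that only one squaring step is required.

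Concretely, the plan is as follows. From the identity \eqref{zeta_uno} used in the previous proof, for each $j$ one has
\[
\bz^j-\bz^0 = \sum_{n=0}^{j-1}\Delta t\,\bz_t^n + \sum_{n=0}^{j-1}\int_{t_n}^{t_{n+1}}(\bz_t(s)-\bz_t(t_n))\,ds.
\]
Substituting this into the expression for $\bz^0$ above and then into the same identity written for $\bz^k$ gives
\[
\bz^k = \overline\bz + \Sigma_1 + \Sigma_2,
\]
where $\Sigma_1$ collects the $\bz_t^n$ contributions from both $\bz^k$ and $\bz^0$, and $\Sigma_2$ collects the two integral remainders. After swapping the order of the double sum $\sum_{j=0}^M\sum_{n=0}^{j-1}=\sum_{n=0}^{M-1}(M-n)\sum$, the triangle inequality yields
\[
\|\Sigma_1\|_X \le 2\sum_{n=0}^{M-1}\Delta t\,\|\bz_t^n\|_X, \qquad
\|\Sigma_2\|_X \le 2\cdot\frac{2}{3}T^{1/2}\Delta t\left(\int_0^T\|\bz_{tt}(s)\|_X^2\,ds\right)^{1/2},
\]
the bound on $\Sigma_2$ being exactly the estimate already derived in the proof of Lemma~\ref{le:our_etal}, only doubled because the same remainder is now bounded twice (once for $k$, once averaged over $j$). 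Applying Cauchy--Schwarz in the finite sum for $\Sigma_1$ gives the factor $T^{1/2}(\Delta t)^{1/2}$.

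Finally, taking $X$-norms, applying $(a+b+c)^2\le 3(a^2+b^2+c^2)$ once, and using $\Delta t=T/M$ produces the constants $3$, $12=3\cdot 4$, and $16/3=3\cdot(4/3)\cdot(4/3)$ in front of the three terms, exactly matching \eqref{eq:zetast_mean}; the multiplicative $4$ in the second and third terms comes entirely from the doubling of $\Sigma_1$ and $\Sigma_2$. The only real care needed is bookkeeping in the index swap for $\Sigma_1$ (verifying that the prefactor $(M-n)/(M+1)\le 1$ together with the extra $\sum_{n=0}^{k-1}\Delta t$ from the $\bz^k$ expansion indeed gives a bound with constant $2$ and summation range producing $\sum_{n=1}^M\|\bz_t^n\|_X^2$ after a harmless index relabeling), which I expect to be the only slightly delicate step.
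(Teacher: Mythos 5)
Your proposal is correct and follows essentially the same route as the paper: express $\bz^0$ as $\overline\bz$ minus the average of the increments $\bz^j-\bz^0$, reuse the integral bound \eqref{cota_int} from the proof of Lemma~\ref{le:our_etal} (which doubles the constants in front of the $\bz_t^n$ and $\bz_{tt}$ contributions), and square only once at the end via $(a+b+c)^2\le 3(a^2+b^2+c^2)$. The only cosmetic difference is that you swap the order of the double sum to bound the averaged increments directly, whereas the paper bounds each $\bigl\|\int_{t_0}^{t_j}\bz_t\,ds\bigr\|_X$ uniformly in $j$ and then averages; both give the same factor $2$ and the same constants $3$, $12T^2/M$, and $16T/3$.
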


\begin{proof}
We first observe that \eqref{eq:fun}  gives
\[
\overline \bz=\frac{1}{M+1}\sum_{j=0}^M\left(\bz^0+\int_{t_0}^{t_j}\bz_t\ ds\right).
\]
Then
\[
\overline \bz=\bz^0+\frac{1}{M+1}\left\{\int_{t_0}^{t_1}\bz_t \ ds+\int_{t_0}^{t_2}\bz_t \ ds+\ldots+\int_{t_0}^{t_M}\bz_t \ ds\right\},
\]
so that 
\[
\|\bz^0\|_X\le \|\overline \bz\|_X+\frac{1}{M+1}\sum_{j=1}^M\left\|\int_{t_0}^{t_j}\bz_t \ ds\right\|_X.
\]
Since, {in the argument from \eqref{zeta_uno} to \eqref{drei} in the proof of  Lemma \ref{le:our_etal},
we have obtained }
\begin{equation}\label{cota_int}
\left\|\int_{t_0}^{t_j}\bz_t \right\|_X\le T^{1/2}(\Delta t)^{1/2}\left(\sum_{n=1}^M \|\bz_t^n\|_X^2\right)^{1/2}+\frac{2}{3}T^{1/2}\Delta t \left(\int_{0}^{T}\left\|\bz_{tt}(t)\right\|_X^2\ ds\right)^{1/2},
\end{equation}
it follows that 
\begin{equation}\label{eq:also}
\|\bz^0\|_X \le  \|\overline\bz\|_X+T^{1/2}(\Delta t)^{1/2}\left(\sum_{n=1}^M \|\bz_t^n\|_X^2\right)^{1/2}
+\frac{2}{3}T^{1/2}\Delta t \left(\int_{0}^{T}\left\|\bz_{tt}(t)\right\|_X^2\ ds\right)^{1/2}.
\end{equation}
Now, taking into account \eqref{eq:fun},  \eqref{cota_int}, and \eqref{eq:also}, we obtain for any $k$ 
\[
\|\bz^k\|_X \le \|\overline\bz\|_X+2T^{1/2}(\Delta t)^{1/2}\left(\sum_{n=1}^M \|\bz_t^n\|_X^2\right)^{1/2}
+\frac{4}{3}T^{1/2}\Delta t \left(\int_{0}^{T}\left\|\bz_{tt}(t)\right\|_X^2\ ds\right)^{1/2},
\]
from which we conclude \eqref{eq:zetast_mean}.
\end{proof}

\begin{remark}\rm
In the sequel we will apply Lemma~\ref{le:our_etal} and assume that we have in the set of snapshots
$\by_h^1=\sqrt{N}\bu_h^0$.
However, applying Lemma~\ref{le:our_etal_mean} instead of Lemma~\ref{le:our_etal}, we can substitute the first snapshot by $\by_h^1=\sqrt{N}\overline{\bu}_h$, where $\overline{\bu}_h$ is the mean value 
$
\overline{\bu}_h=\frac{1}{M+1}\sum_{j=0}^M \bu_h^j$,
to obtain the same results. 
The set of snapshots in this case is
\[
{\cal \bU}=\mbox{span}\left\{\by_h^1,\by_h^2,\ldots,\by_h^N\right\} = 
\mbox{span}\left\{\sqrt{N}\overline\bu_h,\tau\bu_{h,t}^1,\ldots,\tau\bu_{h,t}^M\right\}.
\]
It is standard in numerical simulations to subtract the mean and apply the POD method to the
fluctuations, see \cite{John_et_al_vp}, which by definition have zero mean. Then, the new approach we propose,
in which the first snapshot is a weighted mean and the rest are weighted time derivatives, has the advantage that applying the POD method to
the fluctuations only includes snapshots for the approximations to the time derivatives (since the mean is zero). This procedure was applied in
our numerical studies.
\hspace*{\fill}$\Box$\end{remark}

{
\begin{lemma} \label{le:poin} The following bounds hold
\begin{equation}\label{eq:bound_2nd_term}
\max_{0\le n\le M }\|P_r^v\bu_h^n-\bu_h^n\|_X^2
\le C_{X}^2:=\left(3+6\frac{T^2}{\tau^2}\right)\sum_{k={r+1}}^{d_v}\lambda_k
+\frac{16T}{3}(\Delta t)^2 \int_0^T\|\bu_{h,tt}(s)\|_X^2\ ds
\end{equation}
and
\begin{equation}
\label{eq:cota_pod_0}
\frac{1}{M}\sum_{j=1}^M\left\|\bu_{h}^{j}-\sum_{k=1}^r(\bu_{h}^{j},\bvar_k)_X\bvar_k\right\|_{X}^2\le C_{X}^2.
\end{equation}
\end{lemma}
}
\begin{proof}
Taking $\bz=P_r^v\bu_h-\bu_h$ in \eqref{eq:zetast} and applying \eqref{eq:cota_pod_deriv} ({taking into account that
$(M+1)/M\le2$}) yields
\begin{eqnarray*}
\max_{0\le n\le M }\|P_r^v\bu_h^n-\bu_h^n\|_X^2&\le& \left(3+6\frac{T^2}{\tau^2}\right)\sum_{k={r+1}}^{d_v}\lambda_k\nonumber\\
&&+\frac{4T}{3}(\Delta t)^2\int_0^T\|P_r^v\bu_{h,tt}(s)-\bu_{h,tt}(s)\|_X^2\ ds.
\end{eqnarray*}
Let us bound the second term on the right-hand side above. 
Using the triangle inequality and taking into account that
$\|P_r^v\bw\|_X^2\le \|\bw\|_X^2$, we conclude that 
\[
\|P_r^v\bu_{h,tt}(s)-\bu_{h,tt}(s)\|_X^2 \le  2\|P_r^v\bu_{h,tt}(s)\|_X^2+2 \|\bu_{h,tt}(s)\|_X^2
\le 4 \|\bu_{h,tt}(s)\|_X^2,
\]
Finally, from \eqref{eq:bound_2nd_term} we obtain \eqref{eq:cota_pod_0}.
\end{proof}
The integral term in \eqref{eq:bound_2nd_term} and \eqref{eq:cota_pod_0} is bounded in the appendix. In case one uses difference
quotients instead of Galerkin time derivatives in the set of snapshots this term does not appear.

\subsection{A priori bounds for the projections $P_r^v \bu_h^j$}

This subsection
is devoted to proving a priori bounds for the orthogonal projections $P_r^v\bu_h^j$, $j=0,\ldots,M$.
These bounds are
obtained from a priori bounds for the Galerkin approximation $\bu_h^j$, $j=0,\cdots,M$. We argue as in \cite{novo_rubino}. 
Then, we start getting a priori
bounds for the stabilized approximation $\bu_h^n$. We follow the same arguments we introduced in \cite{pod_da_nos}
{with the difference that since we now compare with the continuous-in-time approximation $\bu_h^j$ the bounds
are independent of the time step discretization}.

We start with the $L^\infty$ norm. Using \eqref{inv}, \eqref{cota_sh_inf_mu}, \eqref{eq:cota_grad_div} and  \eqref{stokespro_mod},
we get
\begin{eqnarray}\label{eq:uh_infty}
\max_{0\le j\le M}\|\bu_h^j\|_\infty&\le& \|\bu_h^j-\bs_h^m(\cdot,t_j)\|_\infty+\|\bs_h^m(\cdot,t_j)\|_\infty
\nonumber\\
&\le& C h^{-d/2}\|\bu_h^j-\bs_h^m(\cdot,t_j)\|_0+C(\|\bu^j\|_{d-2}\|\bu^j\|_2)^{1/2}
\nonumber\\
&\le& C h^{-d/2}C(\bu,p,3)h^{2}+C(\|\bu^j\|_{d-2}\|\bu^j\|_2)^{1/2}\\
&\le& C_{\bu,{\rm inf}}:=C\left( C(\bu,p,3)+\left(\|\bu\|_{L^\infty(H^{d-2})}\|\bu\|_{L^\infty(H^{2})}\right)^{1/2}\right),\nonumber
\end{eqnarray}
{where in the third line we have bounded $\|\bu_h^j-\bs_h^m(\cdot,t_j)\|_0\le
\|\bu_h^j-\bu(\cdot,t_j)\|_0+\|\bu(\cdot,t_j)-\bs_h^m(\cdot,t_j)\|_0$.}
The $L^\infty$ norm of the gradient is bounded in a similar way. 
Using \eqref{inv}, \eqref{cotainfty1}, \eqref{eq:cota_grad_div}, and  \eqref{stokespro_mod},
we obtain 
\begin{eqnarray}\label{eq:grad_uh_infty}
\max_{0\le j\le M}\|\nabla\bu_h^j\|_\infty&\le& \|\nabla\bu_h^j-\nabla\bs_h^m(\cdot,t_j)\|_\infty+\|\nabla\bs_h^m(\cdot,t_j)\|_\infty
\nonumber\\
&\le& C h^{-d/2}\|\bu_h^j-\bs_h^m(\cdot,t_j)\|_1+C\|\nabla \bu^j\|_\infty\nonumber\\
&\le& C h^{-d/2}C(\bu,p,d+1)h^{d-1}+C\|\nabla \bu^j\|_\infty\nonumber\\
&\le& C_{\bu,1,{\rm inf}}:= C\left(C(\bu,p,d+1)+\|\nabla  \bu\|_{L^\infty(L^\infty)}\right),
\end{eqnarray}
{where the same argument as before has been applied to bound $\|\bu_h^j-\bs_h^m(\cdot,t_j)\|_1$}.
Note that the estimate for $d=3$ requires the use of cubic elements for the velocity. 
Finally, applying \eqref{inv}, \eqref{la_cota_mu}, \eqref{eq:cota_grad_div}, and  \eqref{stokespro_mod}
leads to the following bound of the $L^{2d/(d-1)}$ norm of the velocity gradient
\begin{eqnarray}\label{eq:uh_2d_dmenosuno}
\max_{0\le j\le M}\|\nabla \bu_h^j\|_{L^{2d/(d-1)}}&\le& \|\nabla(\bu_h^j-\bs_h^m(\cdot,t_j))\|_{L^{2d/(d-1)}}+\|\nabla \bs_h^m(\cdot,t_j)\|_{L^{2d/(d-1)}}
\nonumber\\
&\le& C h^{-1/2}\|\bu_h^j-\bs_h^m(\cdot,t_j)\|_1+C\bigl(\|\bu\|_1\|\bu\|_2\bigr)^{1/2}\nonumber\\
&\le& C h^{-1/2}C(\bu,p,3)h+C\bigl(\|\bu^j\|_1\|\bu^j\|_2\bigr)^{1/2}\\
&\le& C_{\bu,{\rm ld}}:=C\left( C(\bu,p,3)+\left(\|\bu\|_{L^\infty(H^{1})}\|\bu\|_{L^\infty(H^{2})}\right)^{1/2}\right).\nonumber
\end{eqnarray}

Now, we prove a priori bounds in the same norms for
\[
P_r^v \bu_h^j=(P_r^v \bu_h^j-\bu_h^j)+\bu_h^j.
\]
Since we have already proved error bounds for the second term on the right-hand side, we only need to bound the first one. 
First, we consider the case
$X=L^2(\Omega)^d$. From \eqref{eq:bound_2nd_term} {in Lemma \ref{le:poin}} we get for $j=0,\ldots,M$,
\begin{eqnarray}\label{cota_bos1}
\|\bu_h^j-P_r^v \bu_h^j\|_0\le C_{L^2}.
\end{eqnarray}
Applying the inverse inequality \eqref{inv} gives
\[
\|P_r^v \bu_h^j\|_\infty\le \|\bu_h^j\|_\infty+c_{\rm inv} h^{-d/2}\|\bu_h^j-P_r^v \bu_h^j\|_0.
\]
{Utilizing \eqref{eq:uh_infty} and \eqref{cota_bos1} yields} 
\begin{eqnarray}\label{eq:cotaPlinf2} C_{\rm inf}:=\max_{0\le j\le M} \|P_r^v \bu_h^j\|_\infty\le
C_{\bu,{\rm inf}}+c_{\rm inv} h^{-d/2} C_{L^2},
\end{eqnarray}
so that 
\begin{equation}
\label{eq:cotaPlinf2_sum}
K_{\rm inf}:=\Delta t\sum_{j=0}^{M}\|P_r^v \bu_h^j\|_\infty^2 \le  TC_{\rm inf}^2.
\end{equation}
Now, we observe that from  \eqref{eq:inv_S} and \eqref{cota_bos1}, we get
\begin{equation}\label{ref}
\|\nabla(\bu_h^j-P_r^v \bu_h^j)\|_0\le\|S^v\|_2^{1/2}C_{L^2}.
\end{equation}
{Let us observe that although $\|S^v\|_2^{1/2}$ increases with $r$, the constant $C_{L^2}$ in
\eqref{eq:bound_2nd_term} decreases with $r$. Alternatively, we can argue as in Lemma~\ref{le:poin} but applying 
Lemma~\ref{le:our_etal} with $X=H_0^1(\Omega)^d$ together with \eqref{bound_point} with $R_r^v=P_r^v$ and $W=H_0^1(\Omega)^d$
to get
\begin{eqnarray*}
\max_{0\le n\le M }\|\nabla(P_r^v\bu_h^n-\bu_h^n)\|_0^2&\le& \left(3+6\frac{T^2}{\tau^2}\right)\sum_{k={r+1}}^{d_v}\lambda_k\|\nabla \bvar_k\|_0\nonumber\\
&&+\frac{4T}{3}(\Delta t)^2\int_0^T\|\nabla(P_r^v\bu_{h,tt}(s)-\bu_{h,tt}(s))\|_0^2\ ds.
\end{eqnarray*}
However, after having used
\[
\|\nabla(P_r^v\bu_{h,tt}(s)-\bu_{h,tt}(s))\|_0^2 \le  2\|\nabla P_r^v\bu_{h,tt}(s)\|_0^2+2 \|\nabla\bu_{h,tt}(s)\|_0^2,
\]
we apply \eqref{eq:inv_S} to bound the first term as follows
\[
\|\nabla P_r^v\bu_{h,tt}(s)\|_0^2\le \|S^v\|_2\| P_r^v\bu_{h,tt}(s)\|_0^2
\le \|S^v\|_2\|\bu_{h,tt}(s)\|_0^2.
\]
Thus, we do not avoid with this alternative procedure the appearance of the factor $\|S^v\|_2^{1/2}$ as on the right-hand side 
of \eqref{ref}.
}

Applying inequality \eqref{ref} together with the inverse inequality \eqref{inv} and \eqref{eq:grad_uh_infty}, we obtain
\begin{equation}\label{eq:cotanablaPlinf}
C_{1,\rm inf}:=\max_{0\le j\le M}\|\nabla P_r^v \bu_h^j\|_\infty\le C_{\bu,1,{\rm inf}}+c_{\rm inv} h^{-d/2}\|S^v\|_2^{1/2}C_{L^2},
\end{equation}
and then, as before,
\begin{eqnarray}
\label{eq:cotanablaPlinf_sun}
&&K_{1,{\rm inf}}:=\Delta t\sum_{j=0}^{M}\|\nabla P_r^v \bu_h^j\|_\infty \le T C_{1,\rm inf}.
\end{eqnarray}
 Finally, arguing in the same way but applying \eqref{eq:uh_2d_dmenosuno} instead of \eqref{eq:grad_uh_infty},
 we find
 \begin{equation}\label{eq:cotanablaPld}
C_{\rm ld}:=\max_{0\le j\le M}\|\nabla P_r^v\bu^j\|_{L^{2d/(d-1)}} \le  C_{\bu,{\rm ld}}+c_{\rm inv}h^{-1/2}\|S^v\|_2^{1/2}C_{L^2}.
\end{equation}

The case
$
X=H_0^1(\Omega)^d 
$
is simpler. As before,  from  \eqref{eq:bound_2nd_term}, we get for $j=0,\ldots,M$,
\[\|\nabla(\bu_h^j-P_r^v \bu_h^j)\|_0\le C_{H^1}.
\]
Applying Poincar\'e's inequality yields
\[
\|\bu_h^j-P_r^v \bu_h^j\|_0\le C_pC_{H^1}.
\]
Arguing as before, we obtain 
\begin{eqnarray}\label{eq:cotaPlinf2_1}
 C_{\rm inf}:=\max_{0\le j\le M}\|P_r^v \bu_h^j\|_\infty &\le&
C_{\bu,{\rm inf}}+c_{\rm inv} C_p h^{-d/2}C_{H^1}, \\
\label{eq:cotanablaPlinf_1}
C_{1,\rm inf}:=\max_{0\le j\le M}\|\nabla P_r^v \bu_h^j\|_\infty&\le& C_{\bu,1,{\rm inf}}+c_{\rm inv} h^{-d/2}C_{H^1}, \\ 
\label{eq:cotanablaPld_1}
C_{\rm ld}:=\max_{0\le j\le M}\|\nabla P_r^v\bu^j\|_{L^{2d/(d-1)}}&\le& C_{\bu,{\rm ld}}+c_{\rm inv}h^{-1/2}C_{H^1}.
\end{eqnarray}
From \eqref{eq:cotaPlinf2_1}, it follows that 
\begin{equation}
\label{eq:cotaPlinf2_1_sum}
 K_{\rm inf}:=\Delta t\sum_{j=0}^{M}\|P_r^v \bu_h^j\|_\infty^2 \le T C_{\rm inf}^2
 \end{equation}
and from \eqref{eq:cotanablaPlinf_1} that 
\begin{eqnarray}
\label{eq:cotanablaPlinf_1_2}
K_{1,{\rm inf}}&:=&
\Delta t\sum_{j=0}^{M}\|\nabla P_r^v \bu_h^j\|_\infty \le T C_{1,\rm inf}.
\end{eqnarray}

\begin{remark}\rm
Let us observe that the factor $\|S^v\|_2^{1/2}$ appearing in  \eqref{eq:cotanablaPlinf}, \eqref{eq:cotanablaPlinf_sun}, and 
\eqref{eq:cotanablaPld} does not appear in \eqref{eq:cotanablaPlinf_1}, \eqref{eq:cotanablaPld_1}, and 
\eqref{eq:cotanablaPlinf_1_2}. Hence,  for a comparable value of $\sqrt{\lambda_{r+1}}$ in the first and the second bounds, the
second bounds, i.e.,  those corresponding to the case $X=H_0^1(\Omega)^d$, are smaller. 
\hspace*{\fill}$\Box$\end{remark}
{
\begin{remark}\label{re:bou}
The a priori bounds for $P_r^v \bu_h^j$ depend on the number and size of the eigenvalues and the mesh size $h$ and step size $\Delta t$. Notice, however, that given~$h$,  $r$, and $\Delta t$ can always be chosen so  that all the constants in the a priori bounds remain bounded as $h\rightarrow 0$. 
\hspace*{\fill}$\Box$\end{remark}
}

\section{The POD-ROM method}\label{sec:pod_rom}

\subsection{The case $X=H_0^1(\Omega)^d$}

We now consider the  grad-div POD-ROM model. For the sake of simplifying the error analysis, we use
the implicit Euler method as time integrator: For $n\ge 1$, find $\bu_r^n\in {\cal \bU}^r$ such that
for all $\bvar\in {\cal \bU}^r$
\begin{equation}\label{eq:pod_method2}
\left(\frac{\bu_r^{n}-\bu_r^{n-1}}{\Delta t},\bvar\right)+\nu(\nabla \bu_r^n,\nabla\bvar)+b_h(\bu_r^n,\bu_r^n,\bvar)
+\mu(\nabla \cdot\bu_r^n,\nabla \cdot\bvar)
=(\bff^{n},\bvar).
\end{equation}

Taking $t=t_j$ in \eqref{eq:gal_grad_div} and considering the second equation we get $\bu_h^j\in{\bV_h^l}$. 
Differentiating 
the second equation in \eqref{eq:gal_grad_div} with respect to $t$ and taking again $t=t_j$, we also
obtain $\bu_{h,t}^j\in{\bV_h^l}$.
As a consequence, we observe that ${\cal \bU}^r\subset {\bV_h^l}$ so that $\bu_r^n$ belongs to the 
space ${\bV_h^l}$ of discretely divergence-free functions.

Choosing $t=t_n$ in \eqref{eq:gal_grad_div2} yields for all $\bv_h\in {\bV}_{h,l}$
\begin{equation}\label{eq:gal_grad_div2_n}
\left(\bu_{h,t}^n,\bv_h\right)+\nu(\nabla \bu_h^n,\nabla \bv_h)+b_h(\bu_h^n,\bu_h^n,\bv_h)
+ \mu(\nabla \cdot\bu_h^n,\nabla \cdot \bv_h)
=({\boldsymbol f^n},\bv_h).
\end{equation}
Using the notation 
$
\bbeta_h^n=P_r^v\bu_h^n-\bu_h^n$, 
a straightforward calculation gives
\begin{eqnarray}\label{eq:prov_prop2}
\lefteqn{\hspace*{-5.1em}
\left(\frac{P_r^v \bu^{n}_h-P_r^v \bu^{n-1}_h}{\Delta t},\bvar\right)+\nu(\nabla  P_r^v \bu^n_h,\nabla\bvar)+b_h(P_r^v \bu^n_h,P_r^v \bu^n_h,\bvar)+\mu(\nabla \cdot P_r^v \bu^n_h,\nabla \cdot\bvar)}
\nonumber\\
&=&(\bff^{n},\bvar)+\left(\frac{P_r^v \bu^{n}_h-P_r^v \bu^{n-1}_h}{\Delta t}-\bu_{h,t}^n,\bvar\right)+\mu(\nabla \cdot\bbeta_h^n,\nabla \cdot \bvar)\\
&&+b_h(P_r^v\bu^n_h,P_r^v \bu^n_h,\bvar)-b_h(\bu^{n}_h,\bu^{n}_h,\bvar)\quad \forall\ \bvar\in {\cal \bU}^r.\nonumber
\end{eqnarray}
\begin{theorem}\label{thm:bound_L2L2}{
Let $(\bu,p)$ be the solution of the Navier--Stokes equations \eqref{NS}, 
which is assumed to be sufficiently regular, 
let $\bu_r$ be the grad-div POD
stabilized approximation defined in \eqref{eq:pod_method2} and assume
that the time step restriction \eqref{eq:time} holds. Then, the
following bound is valid
\begin{eqnarray}\label{eq:cota_finalSUPv}
\lefteqn{\sum_{j=1}^n\Delta t \|\bu_r^j-\bu^j\|_0^2}\nonumber\\ 
&\le& 3Te^{2C_u}\left[\|\be_r^0\|_0^2+
\left(2T(\mu+C_m^2T)(3+6(T/\tau)^2)+4C_p^2(T/\tau)^2\right)\sum_{k=r+1}^{d_v} \lambda_k\right.\nonumber\\
&& \left.{}+\left(CC_p^2T+2T(\mu+C_m^2T)\frac{16T}{3}+\frac{16T^2}{3}C_p^2\right)(\Delta t)^2\int_0^T\|\nabla(\bu_{h,tt})\|_0^2\ ds\right]\\
&& +3TC(\bu,p,l)^2h^{2l}+3TC_p^2 \left(3+6(T/\tau)^2\right)\sum_{k=r+1}^{d_v}\lambda_k.\nonumber
\end{eqnarray}}
\end{theorem}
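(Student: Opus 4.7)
The plan is to derive the theorem from the basic error splitting
$$
\bu_r^j - \bu^j \;=\; (\bu_r^j - P_r^v\bu_h^j) \;+\; (P_r^v\bu_h^j - \bu_h^j) \;+\; (\bu_h^j - \bu^j) \;=\; \be_r^j + \bbeta_h^j + (\bu_h^j - \bu^j),
$$
apply $(a+b+c)^2 \le 3(a^2+b^2+c^2)$ in the $L^2$-norm, multiply by $\Delta t$, and sum over $j=1,\ldots,n$. This produces three groups of terms, each of which has already been controlled.

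For the first piece, I bound $\sum_{j=1}^n \Delta t\,\|\be_r^j\|_0^2 \le T\max_{1\le j\le n}\|\be_r^j\|_0^2$ and insert the pointwise-in-time estimate \eqref{eq:error3_b}. For the third piece, the FOM error estimate \eqref{eq:cota_grad_div} gives $\sum_{j=1}^n \Delta t\,\|\bu^j-\bu_h^j\|_0^2 \le T\,C(\bu,p,l)^2 h^{2l}$, which contributes the $3T\,C(\bu,p,l)^2 h^{2l}$ term. The middle piece is the one where the choice $X=H_0^1(\Omega)^d$ is used: Poincar\'e's inequality \eqref{poincare} and the projection bound \eqref{eq:bound_2nd_term} (in $H_0^1$-form) give
$$
\sum_{j=1}^n \Delta t\,\|\bbeta_h^j\|_0^2 \;\le\; T\,C_p^2\,\max_{0\le j\le M}\|\nabla \bbeta_h^j\|_0^2 \;\le\; T\,C_p^2\,C_{H^1}^2,
$$
with $C_{H^1}^2 = \bigl(3+6(T/\tau)^2\bigr)\sum_{k=r+1}^{d_v}\lambda_k + \tfrac{16T}{3}(\Delta t)^2\int_0^T\|\nabla \bu_{h,tt}(s)\|_0^2\,ds$. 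This produces both the $3TC_p^2(3+6(T/\tau)^2)\sum_{k=r+1}^{d_v}\lambda_k$ term on the last line of the claimed inequality and the contribution $16T^2 C_p^2 \int_0^T\|\nabla \bu_{h,tt}(s)\|_0^2 ds$ that gets absorbed into the coefficient of the second-derivative integral (together with the analogous term from \eqref{eq:error3_b}).

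Putting the three contributions together and factoring out the common $e^{2C_u}$ wherever it appears, the coefficients from \eqref{eq:error3_b} (namely $2T(\mu+C_m^2 T)(3+6(T/\tau)^2) + 4C_p^2(T/\tau)^2$ in front of the tail of eigenvalues, and $CC_p^2 T + 2T(\mu+C_m^2 T)\tfrac{16T}{3}$ in front of $\int_0^T \|\nabla \bu_{h,tt}\|_0^2\,ds$) match what the theorem states, and the additional $16T^2 C_p^2$ coming from $\bbeta_h$ yields the remaining contribution. The overall constant prefactor $3T$ is simply the $3$ from the triangle inequality times the $T$ from $\sum_j \Delta t \le T$.

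There is really no hard step here, only careful bookkeeping: the only moderately delicate point is recognising that for the $X=H_0^1(\Omega)^d$ case the $\bbeta_h$ piece must be passed through Poincar\'e to control the $L^2$-norm by the $H^1$-projection error \eqref{eq:bound_2nd_term}, so that the bound stays pointwise in time and inherits exactly the two structural ingredients of $C_{H^1}^2$ (the eigenvalue tail and the $(\Delta t)^2$-weighted second temporal derivative). Once that is done, collecting terms yields \eqref{eq:cota_finalSUPv} immediately.
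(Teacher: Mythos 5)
Your proposal is correct and follows essentially the same route as the paper: the same three-term splitting $\bu_r^j-\bu^j=\be_r^j+\bbeta_h^j+(\bu_h^j-\bu^j)$, the bound $\sum_j\Delta t\,\|\be_r^j\|_0^2\le T\max_j\|\be_r^j\|_0^2$ combined with \eqref{eq:error3_b}, the FOM estimate \eqref{eq:cota_grad_div}, and the POD projection bound (via Poincar\'e, since $X=H_0^1(\Omega)^d$) for the middle term. You merely make explicit the Poincar\'e step and the bookkeeping of the $16T^2C_p^2$ contribution that the paper leaves implicit.
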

\begin{proof}
Subtracting \eqref{eq:prov_prop2} from \eqref{eq:pod_method2} and denoting   by
$\be_r^n=\bu_r^n-P_r^v\bu_h^n \in {\cal \bU}^r$
leads to 
\begin{eqnarray}\label{eq:error_need}
\lefteqn{\left(\frac{\be_r^n -\be_r^{n-1}}{\Delta t},\bvar\right)+\nu(\nabla  \be_r^n ,\nabla\bvar)+\mu(\nabla \cdot \be_r^n,\nabla \cdot\bvar)}
\nonumber\\
&&+b_h(\bu_r^n,\bu_r^n,\bvar)-b_h(P_r^v  \bu^n_h,P_r^v \bu^n_h,\bvar)\nonumber\\
&=&\left(\bu_{h,t}^n-\frac{P_r^v \bu^{n}_h-P_r^v \bu^{n-1}_h}{\Delta t},\bvar\right)
 -\mu(\nabla \cdot\bbeta_h^n,\nabla \cdot \bvar)\nonumber\\
 &&+b_h(\bu^{n}_h,\bu^{n}_h,\bvar)-b_h(P_r^v\bu_r^n,P_r^v\bu_r^n,\bvar),\quad \forall\ \bvar\in {\cal \bU}^r.
\end{eqnarray}
Taking now $\bvar=\be_r^n$ yields
\begin{eqnarray}\label{eq:error2}
\lefteqn{\frac{1}{2\Delta t}\left(\|\be_r^n\|_0^2-\|\be_r^{n-1}\|_0^2\right)+\nu\|\nabla \be_r^n\|_0^2
+\mu\|\nabla \cdot \be_r^n\|_0^2}\nonumber\\
&\le& \left(\bu_{h,t}^n-\frac{P_r^v \bu^{n}_h-P_r^v \bu^{n-1}_h}{\Delta t},\be_r^n\right)+\big(b_h(P_r^v  \bu^n_h,P_r^v \bu^n_h,\be_r^n)-b_h(\bu_r^n,\bu_r^n,\be_r^n)\big)
\nonumber\\
&&-\mu(\nabla \cdot\bbeta_h^n,\nabla \cdot \be_r^n)+\big(b_h(\bu^{n}_h,\bu^{n}_h,\be_r^n)-b_h(P_r^v\bu_r^n,P_r^v\bu_r^n,\be_r^n)\big)
\nonumber\\
&=& I+II+III+IV.
\end{eqnarray}
The first term on the right-hand side of \eqref{eq:error2} is estimated by using the Cauchy--Schwarz 
and Young inequalities
\begin{eqnarray}\label{eq:first}
|I|\le \frac{T}{2}\left\|\bu_{h,t}^n-\frac{P_r^v \bu^{n}_h-P_r^v \bu^{n-1}_h}{\Delta t}\right\|_0^2+\frac{1}{2T}\|\be_r^n\|_0^2.
\end{eqnarray}
To bound the second term on the right-hand side of \eqref{eq:error2}, we use,  
following \cite[Eq.~(45)]{novo_rubino}, the skew-symmetry of the trilinear term, H\"older's inequality, 
\eqref{eq:cotaPlinf2_1}, \eqref{eq:cotanablaPlinf_1}, and Young's inequality to obtain
\begin{eqnarray}\label{eq:cota_er_1}
|II|= |b_h(\be_r^n,P_r^v \bu_h^n,\be_r^n)|&\le& \|\nabla P_r^v\bu_h^n\|_\infty\|\be_r^n\|_0^2+\frac{1}{2}\|\nabla \cdot \be_r^n\|_0
\|P_r^v\bu_h^n\|_\infty\|\be_r^n\|_0\nonumber\\
&\le& \left(\|\nabla P_r^v\bu_h^n\|_\infty+\frac{\|P_r^v\bu_h^n\|_\infty^2}{4\mu}\right)\|\be_r^n\|_0^2
+\frac{\mu}{4}\|\nabla \cdot \be_r^n\|_0^2.
\end{eqnarray}
For the third term, the application of the Cauchy--Schwarz and Young inequalities leads to 
\begin{eqnarray}\label{eq:third}
|III|\le \mu\|\nabla \bbeta_h^n\|_0^2+\frac{\mu}{4}\|\nabla \cdot \be_r^n\|_0^2.
\end{eqnarray}
For estimating the fourth term, we follow \cite[Eq.~(50)]{novo_rubino}.
Using H\"older's inequality, \eqref{eq:uh_infty}, \eqref{eq:uh_2d_dmenosuno}, 
the Sobolev imbedding \eqref{sob1} with $s=1$ and $q=2$, 
\eqref{diver_vol}, and Young's inequality leads to 
\begin{eqnarray}\label{eq:cota_er_6}
|IV|&\le& |b_h(P_r^v\bu_h^n,\bbeta_h^n,\be_r^n)|+|b_h(\bbeta_h^n,\bu_h^n,\be_r^n)|
\nonumber\\&\le&
\|P_r^v\bu_h^n\|_\infty\|\nabla \bbeta_h^n\|_0\|\be_r^n\|_0+\frac{1}{2}\|\nabla \cdot P_r^v\bu_h^n\|_{L^{2d/(d-1)}}
\| \bbeta_h^n\|_{L^{2d}}\|\be_r^n\|_0\nonumber\\
&&+\|\bbeta_h^n\|_{L^{2d}} \|\nabla  \bu_h^n\|_{L^{2d/(d-1)}}\|\be_r^n\|_0+\frac{1}{2}
\|\nabla \cdot\bbeta_h^n\|_0\|\bu_h^n\|_\infty\|\be_r^n\|_0\nonumber\\
&\le& C_{\rm inf}\|\nabla \bbeta_h^n\|_0\|\be_r^n\|_0+ C C_{\rm ld}\|\nabla \bbeta_h^n\|_0\|\be_r^n\|_0
\nonumber\\
&&+C\|\nabla \bbeta_h^n\|_0C_{\bu,\rm ld}\|\be_r^n\|_0+\frac{1}{2}\|\nabla \bbeta_h^n\|_0C_{\bu,\rm inf}\|\be_r^n\|_0\nonumber\\
&\le& C_m^2T\|\nabla \bbeta_h^n\|_0^2+\frac{1}{2T}\|\be_r^n\|_0^2,
\end{eqnarray}
where
\begin{equation}
\label{eq:lacm}
C_m=C(C_{\inf}+C_{\bu,\rm inf} + C_{\rm ld}+C_{\bu,\rm ld}).
\end{equation}
Inserting \eqref{eq:first}, \eqref{eq:cota_er_1}, \eqref{eq:third}, and \eqref{eq:cota_er_6} into \eqref{eq:error2}
and adding over the time instances, 
we get
\begin{eqnarray}\label{eq:error2a}
\lefteqn{\|\be_r^n\|_0^2+2\nu\sum_{j=1}^n\Delta t \|\nabla \be_r^j\|_0^2
+\mu\sum_{j=1}^n\Delta t\|\nabla \cdot \be_r^j\|_0^2}\nonumber\\
&\le&\|\be_r^0\|_0^2+\sum_{j=1}^n\Delta t \left(2\|\nabla P_r^v\bu_h^n\|_\infty+\frac{\|P_r^v\bu_h^n\|_\infty^2}{2\mu}+\frac{2}{T}\right)\|\be_r^j\|_0^2
\nonumber\\
&&+2(\mu+C_m^2T)\sum_{j=1}^n\Delta t\|\nabla \bbeta_h^j\|_0^2+T\sum_{j=1}^n\Delta t \left\|\bu_{h,t}^j-\frac{P_r^v \bu^{j}_h-P_r^v \bu^{j-1}_h}{\Delta t}\right\|_0^2.
\end{eqnarray}
Adding $\pm P_r^v \bu_{h,t}^j$ for bounding 
the last term on the right-hand side of \eqref{eq:error2a} gives
\begin{equation}\label{eq:uno}
\bu_{h,t}^j-\frac{P_r^v \bu^{j}_h-P_r^v \bu^{j-1}_h}{\Delta t}
= (\bu_{h,t}^j-P_r^v \bu_{h,t}^j)+\left(P_r^v \bu_{h,t}^j-\frac{P_r^v \bu^{j}_h-P_r^v \bu^{j-1}_h}{\Delta t}\right),
\end{equation}
from what follows 
\begin{eqnarray}\label{eq:last}
\lefteqn{
\sum_{j=1}^n\Delta t\left\|\bu_{h,t}^j-\frac{P_r^v \bu^{j}_h-P_r^v \bu^{j-1}_h}{\Delta t}\right\|_0^2}\\
&\le& 2
\sum_{j=1}^n\Delta t\left\|P_r^v \bu_{h,t}^j-\bu_{h,t}^j\right\|_0^2
+2\sum_{j=1}^n\Delta t\left\|P_r^v\left(\bu_{h,t}^j-\frac{\bu^{j}_h- \bu^{j-1}_h}{\Delta t}\right)\right\|_0^2.\nonumber
\end{eqnarray}
To bound the first term on the right-hand side of \eqref{eq:last}, we apply Poincar\'e's inequality \eqref{poincare} and \eqref{eq:cota_pod_deriv}, taking into account
that $(M+1)/M\le 2$, to get
\begin{equation}\label{eq:unob}
\sum_{j=1}^n\Delta t\left\|P_r^v \bu_{h,t}^j-\bu_{h,t}^j\right\|_0^2
\le C_p^2\sum_{j=1}^n\Delta t\left\|\nabla\left(P_r^v \bu_{h,t}^j-\bu_{h,t}^j\right)\right\|_0^2
\le \frac{2C_p^2T}{\tau^2} \sum_{k=r+1}^{d_v} \lambda_k.
\end{equation}
For the second one, we also use Poincar\'e's inequality \eqref{poincare} and then notice that, 
utilizing a property of a projection in Hilbert spaces,  
$\|\nabla P_r^v\bw\|_0^2\le \|\nabla \bw\|_0^2$ for every $\bw\in H_0^1(\Omega)^d$, so that
\begin{equation}\label{eq:unobb}
\sum_{j=1}^n\Delta t\left\|P_r^v\left(\bu_{h,t}^j-\frac{\bu^{j}_h- \bu^{j-1}_h}{\Delta t}\right)\right\|_0^2
\le C_p^2 \sum_{j=1}^n\Delta t\left\|\nabla \left(\bu_{h,t}^j-\frac{\bu_h^j-\bu_h^{j-1}}{\Delta t}\right)\right\|_0^2.
\end{equation}
Inserting \eqref{eq:unob} and \eqref{eq:unobb} into \eqref{eq:last}, we obtain
\begin{eqnarray}\label{eq:lastb}
\lefteqn{\sum_{j=1}^n\Delta t\left\|\bu_{h,t}^j-\frac{P_r^v \bu^{j}_h-P_r^v \bu^{j-1}_h}{\Delta t}\right\|_0^2}
\nonumber\\
&\le& \frac{2C_p^2T}{\tau^2} \sum_{k=r+1}^{d_v} \lambda_k +C_p^2\sum_{j=1}^n\Delta t\left\| \nabla\left(\bu_{h,t}^j-\frac{\bu^{j}_h- \bu^{j-1}_h}{\Delta t}\right)\right\|_0^2.
\end{eqnarray}
For the second term on the right-hand side of \eqref{eq:lastb}, a standard argument gives {(e.g., see \cite[(81)]{NS_grad_div})}
\begin{equation}\label{eq:est_temp_der_grad}
C_p^2\sum_{j=1}^n\Delta t\left\| \nabla\left(\bu_{h,t}^j-\frac{\bu^{j}_h- \bu^{j-1}_h}{\Delta t}\right)\right\|_0^2
\le CC_p^2(\Delta t)^2\int_0^T\|\nabla(\bu_{h,tt})\|_0^2\ ds.
\end{equation}

Assuming that 
\begin{equation}\label{eq:time}
\Delta t \left(2C_{1,\rm inf}+\frac{C_{\rm inf}^2}{2\mu}+\frac{2}{T}\right)\le \frac{1}{2},
\end{equation}
 denoting by
\begin{equation}\label{eq:C_u}
C_u=2K_{1,{\rm inf}}+\frac{K_{\rm inf}^2}{2\mu}+2,
\end{equation}
applying Gronwall's Lemma \cite[Lemma~5.1]{hey_ran_IV}, 
\eqref{eq:cota_pod_0}, and taking into account {again} that $(M+1)/M\le 2$, we obtain from \eqref{eq:error2a}
\begin{eqnarray}\label{eq:error3_b}
\lefteqn{
\|\be_r^n\|_0^2+2\nu\sum_{j=1}^n\Delta t \|\nabla \be_r^j\|_0^2+\mu\sum_{j=1}^n\Delta t\|\nabla \cdot \be_r^j\|_0^2} \\
&\le& e^{2C_u}\biggl(\|\be_r^0\|_0^2+\left(2T(\mu+C_m^2T)(3+6(T/\tau)^2)+4C_p^2(T/\tau)^2\right)\sum_{k=r+1}^{d_v}\lambda_k\
\nonumber\\
&&{}+\left(CC_p^2T+2T(\mu+C_m^2T)\frac{16T}{3}\right)(\Delta t)^2\int_0^T\|\nabla(\bu_{h,tt})\|_0^2\ ds\biggr).
\nonumber
\end{eqnarray}

We have $\sum_{j=1}^n\Delta t \|\be_r^j\|_0^2\le T \max_{1\le j\le n}\|\be_r^j\|_0^2$ and
\[
\sum_{j=1}^n\Delta t \|\bu_r^j-\bu^j\|_0^2
\le 3\left(\sum_{j=1}^n\Delta t \|\be_r^j\|_0^2+\sum_{j=1}^n\Delta t\|P_r^v \bu_h^j-\bu_h^j\|_0^2
+\sum_{j=1}^n\Delta t\| \bu_h^j-\bu^j\|_0^2\right).
\]
Inserting the estimates \eqref{eq:error3_b}, \eqref{eq:cota_pod_0}, and \eqref{eq:cota_grad_div}
leads directly to  \eqref{eq:cota_finalSUPv}.
\end{proof}
{\begin{remark}\rm 
Arguing as in \cite[Proposition~3.2]{hey_ran_IV}, one can get an a priori bound for 
\begin{equation}\label{second_dev}
\int_0^T\|\nabla(\bu_{h,tt})\|_0^2
\end{equation}
in \eqref{eq:cota_finalSUPv}.
 However, the
error analysis in \cite{hey_ran_IV} is not valid for high Reynolds numbers. In the appendix we get an error bound for this
term with constants independent of inverse powers of the viscosity.
A robust estimate for \eqref{second_dev}, in the case $l\ge3$, is derived.
Hence, for $l\ge3$, there is no explicit appearance of inverse powers of the 
viscosity coefficient in the error bound \eqref{eq:cota_finalSUPv}. The technical reason for not obtaining a robust estimate for $l=2$ with the technique from 
the appendix is the gradient in front of $\bu_{h,tt}$. This gradient was introduced with the transition 
from the $L^2(\Omega)^d$ norm to the corresponding norm of the gradient in  
\eqref{eq:unobb} in order to be able to apply the Hilbert space argument.  Note that, with the approach
presented in the appendix, boundedness can be shown also for $l=2$, but not the robustness of the bound. 
\hspace*{\fill}$\Box$\end{remark}
}
{\begin{remark} \rm In view of Remark \ref{re:bou} and assuming the solution is regular enough,
assumption \eqref{eq:time} only requires a step size smaller than a given constant whose size depends
on the constants $C_{1,\rm inf}$, $C_{\rm inf}$, $\mu$ and $T$. 
\hspace*{\fill}$\Box$\end{remark}
}

\begin{remark}\rm With the error decomposition in the proof of Theorem \ref{thm:bound_L2L2} and applying \eqref{eq:inv_M} or the inverse inequality \eqref{inv} to \eqref{eq:error3_b},
one can also prove a robust error bound for $\sum_{j=1}^n\Delta t \|\nabla (\bu_r^j-\bu^j)\|_0^2$.
\hspace*{\fill}$\Box$\end{remark}

We can apply Lemma~\ref{le:our_etal} to get pointwise estimates with respect to time 
both in $L^2(\Omega)$ and $H^1(\Omega)$. Let us prove pointwise estimates in $L^2(\Omega)$,
the argument for proving bounds in $H^1(\Omega)$ is the same. 
Since 
\[
 \|\bu_r^n-\bu^n\|_0^2\le
3\|\be_r^n\|_0^2+3\|P_r^v \bu_h^n-\bu_h^n\|_0^2+3\| \bu_h^n-\bu^n\|_0^2,
\]
we can utilize \eqref{eq:error3_b} and \eqref{eq:cota_grad_div} to bound the first and third term on the right-hand side. For bounding the second term, \eqref{eq:bound_2nd_term} is utilized. More precisely, taking into account that the case $X=H_0^1(\Omega)^d$ is analyzed, we
apply Poincar\'e's inequality \eqref{poincare}  to bound
$\|P_r^v \bu_h^n-\bu_h^n\|_0^2\le C_p^2 \|\nabla(P_r^v \bu_h^n-\bu_h^n)\|_0^2$ 
and then \eqref{eq:bound_2nd_term}.
Collecting the estimates \eqref{eq:error3_b}, \eqref{eq:bound_2nd_term}, \eqref{eq:cota_grad_div}
proves the following theorem. 

\begin{theorem} Let the assumptions of Theorem~\ref{thm:bound_L2L2} be satisfied, then the following bound is 
valid
\begin{eqnarray*}
\lefteqn{\max_{0\le n\le M}\|\bu_r^n-\bu^n\|_0^2} 
 \\
&\le& 3e^{2C_u}\left[\|\be_r^0\|_0^2+\left(2T(\mu+C_m^2T)(3+6(T/\tau)^2)+4C_p^2(T/\tau)^2\right)\sum_{k=r+1}^{d_v}\lambda_k\right.
\nonumber\\
&& +\left.\left(CC_p^2T+2T(\mu+C_m^2T)\frac{16T}{3}+\frac{16T}{3}C_p^2\right)(\Delta t)^2\int_0^T\|\nabla(\bu_{h,tt})\|_0^2\ ds\right]\\
&& +3C_p^2(3+6(T/\tau)^2)\sum_{k=r+1}^{d_v}\lambda_k
+ {C(\bu,p,l+1)^2 h^{2l}},
\end{eqnarray*}
where the constants for $l\ge3$ do not blow up for small viscosity coefficients since \eqref{second_dev}
is bounded.
\end{theorem}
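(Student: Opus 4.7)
The plan is to follow the roadmap sketched just before the theorem statement: split the error by the triangle inequality into three pieces and then bound each with results already proved. Writing
\begin{equation*}
\|\bu_r^n-\bu^n\|_0^2 \le 3\|\be_r^n\|_0^2 + 3\|P_r^v\bu_h^n-\bu_h^n\|_0^2 + 3\|\bu_h^n-\bu^n\|_0^2,
\end{equation*}
the three terms correspond, respectively, to the ROM-versus-projection error, the POD projection error along the FOM trajectory, and the FOM-versus-continuous-solution error.

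For the first term, the estimate \eqref{eq:error3_b} already proved produces an upper bound whose right-hand side is monotone in $n$, so the same bound applies uniformly in $n$ without any additional Gronwall step; this contributes the $3e^{2C_u}[\,\cdots\,]$ summand of the target inequality. For the third term, the robust FOM estimate \eqref{eq:cota_grad_div} yields the pointwise-in-time bound $\|\bu^n-\bu_h^n\|_0\le C(\bu,p,l+1)h^l$ with a $\nu$-independent constant, contributing the final $h^l$ term.

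The middle term is where the hypothesis $X=H_0^1(\Omega)^d$ enters explicitly. Because $P_r^v$ is then the $H^1$-orthogonal projection, \eqref{eq:bound_2nd_term} directly controls $\max_n\|\nabla(P_r^v\bu_h^n-\bu_h^n)\|_0^2$ by $C_{H^1}^2$. To pass from the gradient to the $L^2$ norm I would invoke Poincar\'e's inequality \eqref{poincare}, picking up a factor $C_p^2$. Expanding $C_{H^1}^2$ according to \eqref{eq:bound_2nd_term} then produces exactly the term $3C_p^2(3+6T^2/\tau^2)\sum_{k=r+1}^{d_v}\lambda_k$ that appears in the statement, together with a contribution proportional to $(\Delta t)^2\int_0^T\|\nabla\bu_{h,tt}\|_0^2\,ds$ with prefactor $16TC_p^2$; since $e^{2C_u}\ge 1$, this prefactor can equivalently be absorbed under the outer factor $3e^{2C_u}$, which is how the coefficient $16TC_p^2$ ends up inside the parentheses of the $(\Delta t)^2$ contribution in the final statement.

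I do not foresee any substantial obstacle: the three pieces have all been estimated elsewhere in the paper and the remaining work is essentially a careful bookkeeping of constants. The only mildly delicate point is the Poincar\'e step used to pass from the $H^1$-norm projection bound \eqref{eq:bound_2nd_term} to an $L^2$-norm projection bound, which is also what justifies keeping $\|\nabla\bu_{h,tt}\|_0^2$ (the $X$-norm with $X=H_0^1$) under the integral. Robustness for $l\ge 3$ is then immediate from the appendix bound on \eqref{second_dev} referenced in the theorem's concluding remark.
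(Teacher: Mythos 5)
Your proposal is correct and follows essentially the same route as the paper: the same triangle-inequality splitting into $\be_r^n$, the projection error, and the FOM error, with \eqref{eq:error3_b}, \eqref{eq:bound_2nd_term} combined with Poincar\'e's inequality \eqref{poincare}, and \eqref{eq:cota_grad_div} used for the respective pieces. Your bookkeeping of the constants, including absorbing the $16TC_p^2(\Delta t)^2$ contribution under the factor $3e^{2C_u}$, matches how the stated coefficients arise.
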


\subsection{The case $X=L^2(\Omega)^d$}

For the sake of brevity, in the proof of the following theorem, we are going to mention only the differences with respect to the analysis for the case $X=H_0^1(\Omega)^d$.

{
\begin{theorem}\label{thm:bound_L2L20}
Assume that the solution $(\bu,p)$ of the
Navier--Stokes equations \eqref{NS} is sufficiently regular, that the time step restriction 
\eqref{eq:time} is satisfied, and let $\bu_r$ be the grad-div POD
stabilized approximation defined in \eqref{eq:pod_method2}.  Then, the
following bound holds.
\begin{eqnarray*}\label{eq:cota_finalSUPv0}
\lefteqn{\sum_{j=1}^n\Delta t \|\bu_r^j-\bu^j\|_0^2}\nonumber\\ 
&\le& 3Te^{2C_u}\left[\|\be_r^0\|_0^2+2T(\nu+\mu+C_m^2T)\|S^v\|_2(3+6(T/\tau)^2)\sum_{k=r+1}^{d_v} \lambda_k\right.\nonumber\\
&& \left.{}+(\Delta t)^2\left(CT+2T(\nu+\mu+C_m^2T)\|S^v\|_2 (16T^2)/3+(16T^2)/3\right)\int_0^T\|\bu_{h,tt}\|_0^2\ ds\right]\\
&& +3TC(\bu,p,l+1)^2h^{2l}+3T\left(3+6(T/\tau)^2\right)\sum_{k=r+1}^{d_v}\lambda_k.\nonumber
\end{eqnarray*}
\end{theorem}
}
\begin{proof}
Observing that the orthogonality property of $P_r^v$ affects now the term with the approximation 
of the temporal derivative (instead of the viscous term) we obtain, instead of \eqref{eq:error_need},
the following error equation
\begin{eqnarray}\label{eq:error_need_0}
\lefteqn{\left(\frac{\be_r^n -\be_r^{n-1}}{\Delta t},\bvar\right)+\nu(\nabla  \be_r^n ,\nabla\bvar)+\mu(\nabla \cdot \be_r^n,\nabla \cdot\bvar)}\nonumber\\
&& +b_h(\bu_r^n,\bu_r^n,\bvar)-b_h(P_r^v  \bu^n_h,P_r^v \bu^n_h,\bvar)\nonumber\\
&=&\left(\bu_{h,t}^n-\frac{\bu_h^n-\bu_h^{n-1}}{\Delta t },\bvar\right)-\nu(\nabla \bbeta_h^n,\nabla  \bvar)
 -\mu(\nabla \cdot\bbeta_h^n,\nabla \cdot \bvar)\nonumber\\
 &&+b_h(\bu^{n}_h,\bu^{n}_h,\bvar)-b_h(P_r^v  \bu^n_h,P_r^v \bu^n_h,\bvar)\quad \forall\ \bvar\in {\cal \bU}^r.
\end{eqnarray}
Using the same techniques as for the other case, we infer, instead of \eqref{eq:error2a}, that 
\begin{eqnarray}\label{eq:error2_0}
\lefteqn{\|\be_r^n\|_0^2+2\nu\sum_{j=1}^n\Delta t \|\nabla \be_r^j\|_0^2
+\mu\sum_{j=1}^n\Delta t\|\nabla \cdot \be_r^j\|_0^2}\nonumber\\
&\le& \|\be_r^0\|_0^2+\sum_{j=1}^n\Delta t\left(2\|\nabla P_r^v\bu_h^n\|_\infty+\frac{\|P_r^v\bu_h^n\|_\infty^2}{2\mu}+\frac{2}{T}\right)\|\be_r^j\|_0^2\nonumber\\
&&+2(\nu +\mu+C_m^2T)\sum_{j=1}^n\Delta t\|\nabla \bbeta_h^j\|_0^2 
+CT(\Delta t)^2\int_0^T\|\bu_{h,tt}(s)\|_0^2\ ds.
\end{eqnarray}
From \eqref{eq:error2_0} we continue as before, compare \cite[Theorem~5.3]{novo_rubino}, and 
take into account that applying \eqref{eq:inv_S} and \eqref{eq:cota_pod_0}, it is 
\[
\sum_{j=1}^n\Delta t\|\nabla \bbeta_h^j\|_0^2 \le T \|S^v\|_2\left((3+6(T/\tau)^2)\sum_{k=r+1}^{d_v}\lambda_k+
\frac{16T^2}{3}(\Delta t)^2\int_0^T \|\bu_{h,tt}(s)\|_0^2\ ds\right).
\]
Then, instead of \eqref{eq:error3_b}, we conclude
\begin{eqnarray}\label{eq:error3_b_L2}
\lefteqn{
\|\be_r^n\|_0^2+2\nu\sum_{j=1}^n\Delta t \|\nabla \be_r^j\|_0^2+\mu\sum_{j=1}^n\Delta t\|\nabla \cdot \be_r^j\|_0^2} \nonumber\\
&\le& e^{2C_u}\biggl(\|\be_r^0\|_0^2+2T(\nu+\mu+C_m^2T)\|S^v\|_2(3+6(T/\tau)^2)\sum_{k=r+1}^{d_v}\lambda_k\
\nonumber\\
&&{}+(\Delta t)^2\left(CT+2T(\nu+\mu+C_m^2T)\|S^v\|_2 (16T^2)/3\right)\int_0^T\|\bu_{h,tt}(s)\|_0^2\ ds\biggr).
\end{eqnarray}
\end{proof}
\begin{remark}\rm
A robust estimate for the following term and $l\ge2$ is proved in the appendix
\begin{equation}\label{second_dev_2}
\int_0^T\|\bu_{h,tt}\|_0^2\ ds.
\end{equation} 

In \cite[Section~5]{novo_rubino}, there is an error analysis for the case in which a fully discrete Galerkin method 
with the same implicit Euler method is utilized for the FOM.
\hspace*{\fill}$\Box$\end{remark}
{
\begin{remark}\rm For the proof of Theorem \ref{thm:bound_L2L20} one can compare $\bu_r^n$ with the Ritz projection $R_r^v \bu_h^n$ 
defined by
\[
(\nabla R_r^v \bu_h^n,\nabla \bvar_i)=(\nabla \bu_h^n,\nabla\bvar_i),\quad i=1,\ldots,r,
\]
instead of the $L^2$ projection $P_r^v \bu_h^n$, as suggested in \cite[Lemma 4.3]{koc_rubino_et_al}. However, unlike \cite[Lemma 4.3]{koc_rubino_et_al}, in which optimal bounds are obtained for the heat equation, the comparison with the Ritz projection does not allow to improve the error bounds for the Navier--Stokes equations. On the one hand, although comparing with $R_r^v \bu_h^n$ the error term $\nu\|\nabla (R_r^v \bu_h^n-\bu_h^n)\|_0$ disappears (this term is $\nu\|\nabla \bbeta_h^n\|_0$ in Theorem~\ref{thm:bound_L2L20}), we cannot get rid of the term
$\mu\|\nabla \cdot(R_r^v \bu_h^n-\bu_h^n)\|_0$ (this term is $\mu\|\nabla \cdot\bbeta_h^n\|_0$ in Theorem~\ref{thm:bound_L2L20}). On the other hand, there is also a term involving $\|\nabla (R_r^v \bu_h^n-\bu_h^n)\|_0$ coming from the nonlinear term ($\|\nabla \bbeta_h^n\|_0$ in Theorem \ref{thm:bound_L2L20}) that does not disappear either. Thus, the $H_0^1(\Omega)^d$ norm of the error in the Ritz projection has to be estimated anyway.
\hspace*{\fill}$\Box$\end{remark}
}

\begin{remark}\rm
Let us observe that at this point it seems that the snapshots from the time derivative are not helpful in the $L^2(\Omega)^d$ case.
However, it turns out that these snapshots are needed for obtaining pointwise estimates in time. 
In fact, we can repeat the arguments of the previous section to get such estimates for
the $L^2(\Omega)^d$ error, and also for the $H^1(\Omega)^d$ error applying the inverse inequality \eqref{eq:inv_S}, that cannot be
obtained, at least with the same arguments, without adding those snapshots.
\hspace*{\fill}$\Box$\end{remark}

\begin{remark}\rm {As observed already in \cite{kunisch,KV_2002}}, comparing the $L^2(\Omega)^d$ and the $H_0^1(\Omega)^d$ cases, we can observe that in the $L^2(\Omega)^d$ case the eigenvalues
are multiplied by $\|S^v\|_2$ in the error bound \eqref{eq:error3_b_L2}, which increases the size of this term, 
compared with \eqref{eq:error3_b}.
In addition, the factor  $\|S^v\|_2^{1/2}$ appears in the definition \eqref{eq:C_u} of $C_u$, 
which
gives a bigger constant in the exponential term of the error bound, and also in the assumption for the time steps \eqref{eq:time}, i.e.,
the condition for applying Gronwall's lemma leads to a severe time 
step restriction if $\|S^v\|_2$ is very large. 
On the other hand, using $L^2(\Omega)^d$ as projection space leads to an integral term in the error
bound, \eqref{second_dev_2}, that can be bounded in a robust way for $l\ge2$, whereas $l\ge3$ is necessary if 
the projection space is 
$H_0^1(\Omega)^d$,  to bound the term \eqref{second_dev}, at least with the approach from the appendix. 
Thus, both approaches possess advantages with respect to certain aspects of the error analysis. 
\hspace*{\fill}$\Box$\end{remark}

\begin{remark}\rm A popular pair of finite element spaces that leads to weakly di\-ver\-gence-free discrete
velocity fields is the Scott--Vogelius pair $({\boldsymbol X}_h^l, Q_{h,\mathrm{disc}}^{l-1})$. It is inf-sup 
stable for $l\ge d$ on so-called barycentric-refined grids \cite{Zha05}. A favorable feature of the 
Scott--Vogelius pair is that it leads to so-called pressure-robust velocity estimates, i.e., the 
velocity error bounds do not depend on the pressure. In particular, an estimate of form \eqref{eq:cota_grad_div} was derived in \cite{SL17} with $C(\bu,l+1)$. In estimating the integral term 
as performed in the appendix, the term $(\sigma_2,\nabla\cdot \bvar_h)$ vanishes if $ \bvar_h$ is 
weakly divergence-free, such that the bound of the integral term does not contain the pressure. 
Finally, the snapshots are only from the velocity field, which can be (formally) computed by 
solving an equation of type \eqref{eq:gal_grad_div2}, which does not contain the pressure. 
Hence all terms in estimates  \eqref{eq:cota_pod_0} and \eqref{eq:cota_pod_deriv} are independent of the 
pressure. Applying the same analysis as for the Taylor--Hood pair of spaces, even with some 
simplifications, gives for the Scott--Vogelius pair pressure-robust error bounds of the same type, 
under the conditions on the inf-sup stability mentioned above. 
\hspace*{\fill}$\Box$\end{remark}


\section{Numerical studies}\label{sec:num}
We now present some results for the well-known benchmark problem defined in~\cite{bench}. The domain is given by
\[
\Omega=(0,2.2)\times(0,0.41)/\left\{ (x,y) \mid (x-0.2)^2 + (y-0.2)^2 \le 0.0025\right\}.
\]
On the inflow boundary, {$x=0$}, the velocity is  prescribed by
\[
\bu(0,y)=\frac{6}{0.41^2}\sin\left(\frac{\pi t}{8}\right)\left(\begin{array}{c} y(0.41-y)\\ 0\end{array}\right)
\]
and on the outflow boundary $x=2.2$ we set the so-called ``do nothing" boundary condition.
On the rest of the boundary the velocity is set $\bu ={\bf 0}$. It is well known that for $\nu=0.001$ there is a stable periodic orbit.

We use piecewise quadratic and linear elements for velocity and pressure, respectively, on the same mesh as in~\cite{BDF2},
resulting in 27168 degrees of freedom for the velocity and 3480 for the pressure. We call this grid here the main
grid. The grid obtained from this one by regular refinement  (107328 degrees of freedom for the velocity and 13584 for the pressure), called refined grid henceforth, was used only to compute a reference solution and the errors of the snapshots. {For the time integration of the semidiscrete FOM~\eqref{eq:gal_grad_div} we used a variable stepsize  second order backward differentiation formula (BDF2) described in~\cite{BDF2}}. On both grids we computed the periodic orbit 
{(see \cite{periodic_orbit}, \cite[Example D.8]{John})} by finding the fixed point of the return map to a Poincar\'e section (e.g., see \cite{periodic}). The periods, computed with a relative error below $10^{-6}$ were $T=0.331761$ and~$T_r=0.331338$ on the main and refined grid, respectively. We computed snapshots over one period with a spacing of $T/1024$ and~$T_r/1024$. {Notice that velocity, acceleration and pressure at these equally-spaced time levels were computed by interpolation, since they do not necessarily coincide with the steps taken by the variable stepsize BDF2 code; in fact, for the snapshots on the main grid, the BDF2 code took 5553 steps, the stepsizes selected as explained in~\cite{BDF2}, when the tolerances for the absolute and relative values of the local errors were set to~$10^{-10}$ and~$10^{-7}$, respectively (see~\cite{BDF2} for details).} Taking as exact the results on the refined grid, the relative errors of the snapshots are shown in Fig.~\ref{err_snap}, both in velocity and in acceleration, and  both in the $L^2$ and~$H_0^1$ norms. It can be seen that the errors in velocity are around 0.003 in the $L^2$ norm and around 0.03 in the~$H_0^1$ norm, and that the acceleration errors are, approximately, eight times larger than those of the velocity in the $L^2$ norm, and four times larger in the~$H_0^1$ norm.

\begin{figure}[t!]
\centerline{\includegraphics[width=0.48\textwidth]{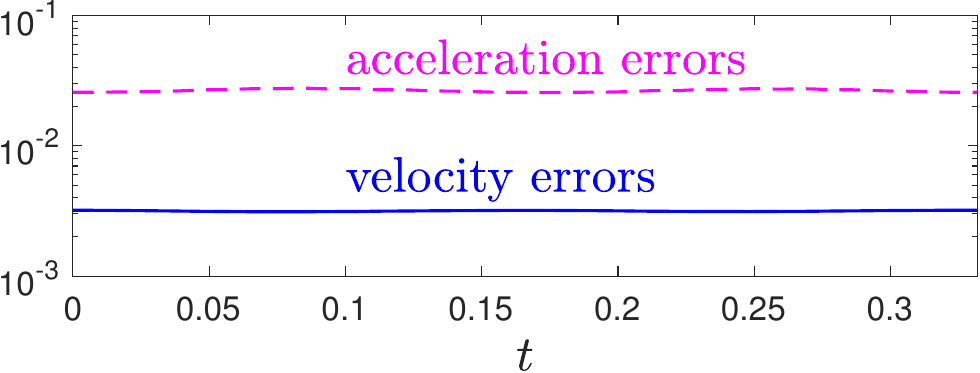}\hspace*{1em}
 \includegraphics[width=0.48\textwidth]{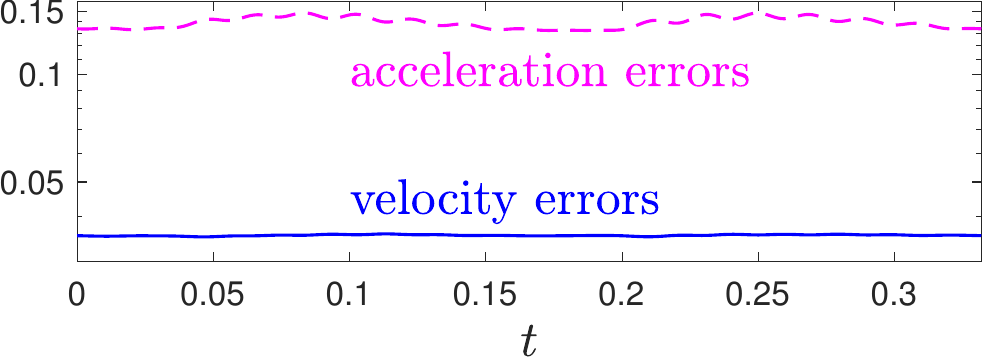}}
\caption{Snapshot errors in the $L^2$ norm (left) and in the $H_0^1$ norm (right).}\label{err_snap} 
\end{figure}

Fig.~\ref{sing_values} shows the first 256 singular values $\sigma_k=\sqrt{\lambda_k}$ relative to their Euclidean norm, that is,
\begin{equation}
\label{singular}
\sigma_k/\Bigg(\sum_{j=1}^N \sigma_k^2\Bigg)^{1/2},
\end{equation}
where $N$ is the number of snapshots, both when the inner product is that of $L^2$ and $H_0^1$, in the cases where the elements in the data sets are $\by_h^j=\bu_h^j-\overline \bu_h$, $\by_h^j=T\bu_{u,t}^j$, and $\by_h^j=T\delta_t\bu_h^j=T(\bu_h^j-\bu_h^{j-1})/\Delta t$.
\begin{figure}
\centerline{\includegraphics[width=0.48\textwidth]{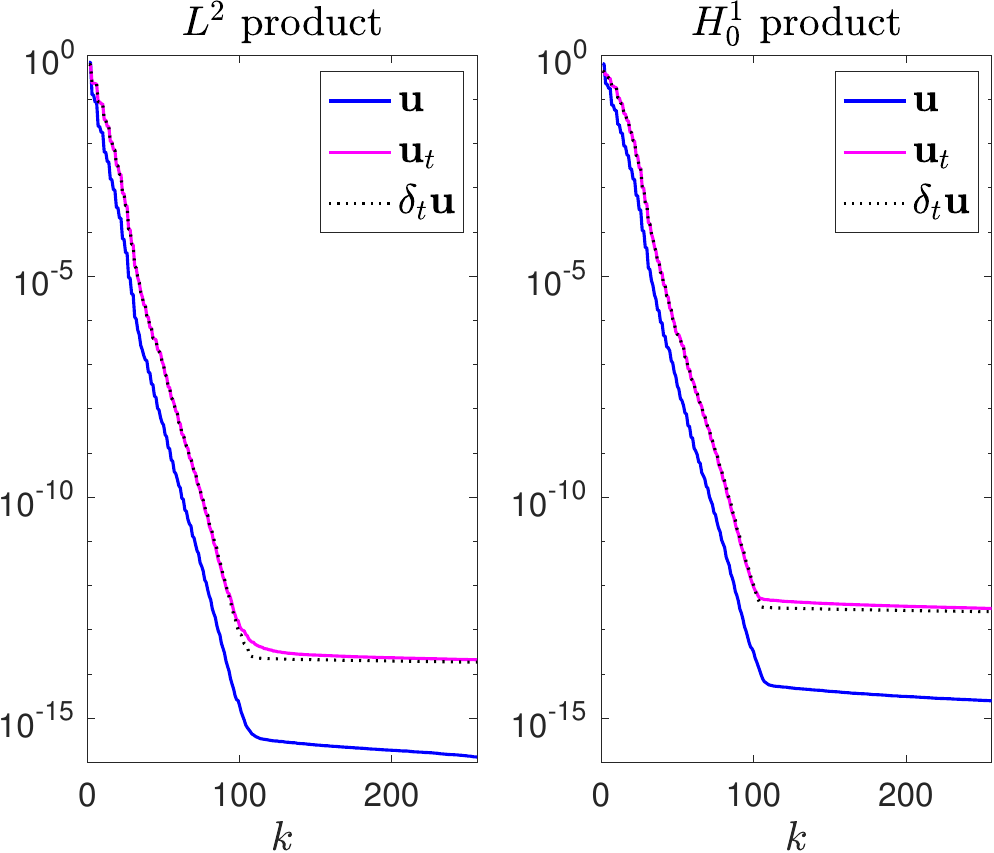}}
\caption{First 256 singular values of data set relative to their Euclidean norm~(\ref{singular}), for the $L^2$ inner product (left) and the $H_0^1$ inner product (right).
}\label{sing_values} 
\end{figure}
We see that with both inner products, the singular values are slightly larger 
when the data set is the time derivatives or their approximation, but already for $k=50$ their value is considerable smaller than the approximation errors in~Fig.~\ref{err_snap}. In fact, for the singular values corresponding to the data set given by the fluctuations $\bu_h^j-\overline \bu_h$ (blue line), there are only $16$ values above $10^{-3}$ in Fig.~\ref{sing_values} (left) and $14$ above~$10^{-2}$ in the right plot. Note that $10^{-3}$ and~$10^{-2}$ are about one third of the average errors in Fig.~\ref{err_snap} for the $L^2$ and the~$H_0^1$ norm, so it reasonable to use a POD basis with no more elements, since although with a larger basis we may better approximate the elements in the data set, these have already larger approximation errors. Consequently, in the sequel, we use a POD basis with 16 elements if the  $L^2$ inner product is utilized and 
$14$ in case of the $H_0^1$ inner product.

Next we present in~Fig.~\ref{proy_err} the projection errors of the snapshots,
\begin{equation}
\label{proy_err_f}
\left\| \bu_h^j - P_r^v \bu_h^j\right\|_i \bigg/  \left\| \bu_h^j \right\|_i,\quad j=1,\ldots,N, \quad i \in \{0,1\}, 
\end{equation}
for data sets $\by_h^j=\bu_h^j-\overline \bu_h$, $\by_h^j=T\bu_{u,t}^j$, and $\by_h^j=T\delta_t\bu_h^j={T}(\bu_h^j-\bu_h^{j-1})/\Delta t$, when $r=16$ and $r=14$ for POD basis  based on $L^2$ and $H_0^1$ inner products, respectively. 
\begin{figure}
\centerline{\includegraphics[width=0.48\textwidth]{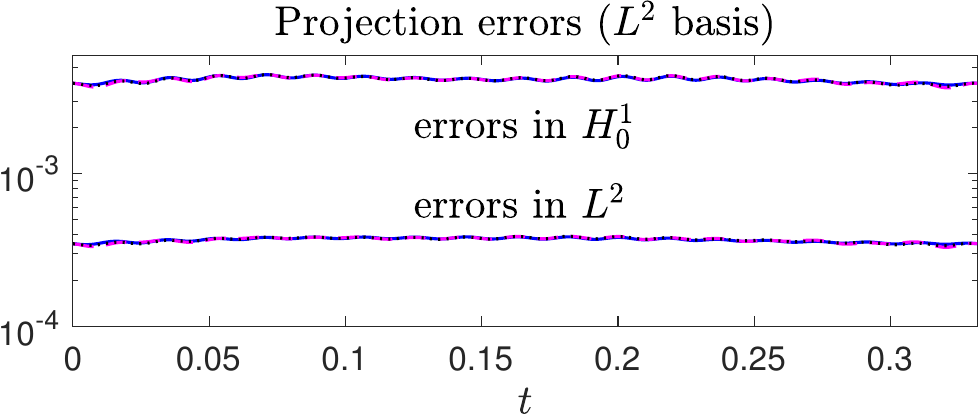}\hspace*{1em}
 \includegraphics[width=0.48\textwidth]{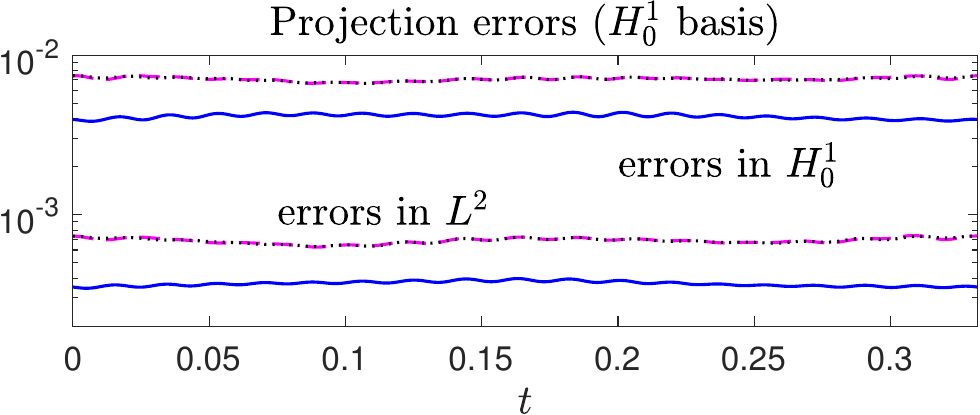}}
\caption{Snapshot  projection errors~\eqref{proy_err_f} for the correlation matrix based on $L^2$ product (left) and $H_0^1$ products (right). Results for data sets $\by_h^j=\bu_h^j-\overline \bu_h$ (blue, {continuous line}), $\by_h^j=T\bu_{u,t}^j$ (magenta, {discontinuous line}), and $\by_h^j=T\delta_t\bu_h^j={T}(\bu_h^j-\bu_h^{j-1})/\Delta t$ (black, {dotted line}).
{Note that several lines are on top of each other.}
}\label{proy_err} 
\end{figure}
We notice that the results on the left plot are quite independent of the data set used, whereas those on the right plot show some advantage for the data set based on the fluctuations, $\by_h^j=\bu_h^j-\overline \bu_h$, although in all cases the projection errors are well below  the quantity~\eqref{singular}
that we used to select the size of the POD bases,
which, recall, are $0.001$ and $0.01$ for POD basis based on $L^2$ and $H_0^1$, respectively.  

Fig.~\ref{pod-rom_err} depicts the POD-ROM errors
\begin{equation}
\label{pod-rom_err_f}
\left\|\bu_r^j -\bu_h^j \right\|_i  \bigg/ {\left\| \bu_h^j \right\|_i},\quad j=1,\ldots,N, \quad 
i \in \{0,1\},
\end{equation}
with the same color convention as in~Fig.~\ref{proy_err}. 
\begin{figure}
\centerline{\includegraphics[width=0.48\textwidth]{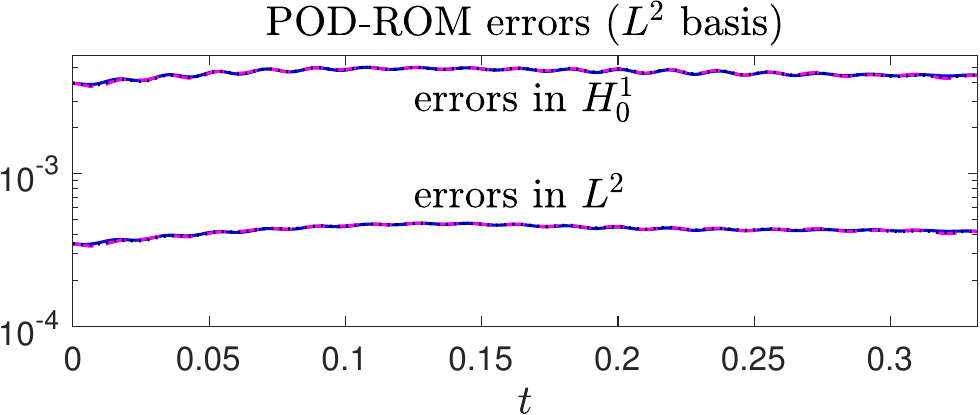}\hspace*{1em}
 \includegraphics[width=0.48\textwidth]{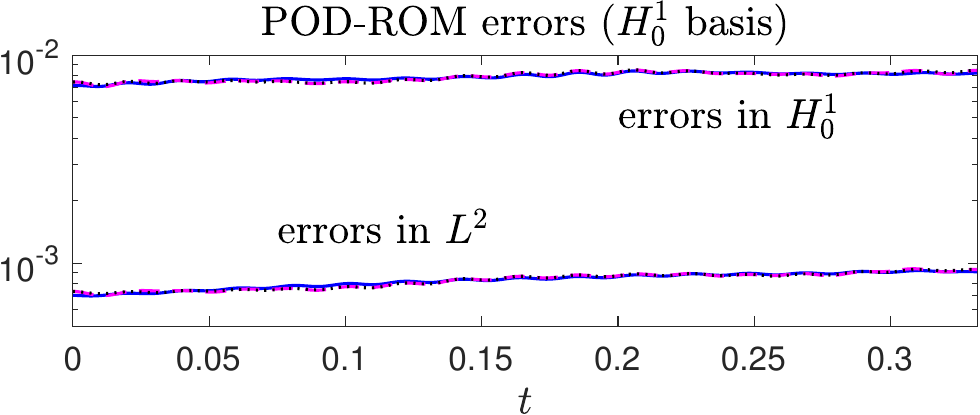}}
\caption{POD-ROM errors~\eqref{pod-rom_err_f} for correlation matrix based on $L^2$ product (left) and $H_0^1$ products (right). Results for data sets $\by_h^j=\bu_h^j-\overline \bu_h$ (blue, {continuous line}), $\by_h^j=T\bu_{u,t}^j$ (magenta, {discontinuous line}) and $\by_h^j=T\delta_t\bu_h^j={T}(\bu_h^j-\bu_h^{j-1})/\Delta t$ (black, {dotted line}).
}\label{pod-rom_err} 
\end{figure}
For the time integration, instead of the backward Euler method in~\eqref{eq:pod_method2}, we used the two step BDF formula with fixed step size $\Delta t=T/N$, $N=1024$, except for the first step, where the backward Euler method was applied. The parameter for the grad-div term was as in the computation of the snapshots $\mu=0.01$. 
We see that the results are very independent of the data set used, for both inner products, and that the POD-ROM errors are slightly larger than the corresponding projection errors, except for data sets based on snapshots, $\by_h^j=\bu_h^j-\overline \bu_h$ (blue line) and POD basis based on $H_0^1$ inner products (right plot) which are
approximately twice the size of the corresponding errors in~Fig.~\ref{proy_err}.

Finally, lift and drag coefficients are studied. For the FOM, they were computed as indicated in~\cite{John} (see also~\cite{BDF2}). For the POD-ROM method they were computed as in~\cite{pod_da_nos}. In Fig.~\ref{lifts-drags} we see the evolution of the drag and lift coefficients along 12 periods for the reference solution (computed on the refined grid), the FOM, and the POD-ROM approximation with the fluctuations $\bu_h^j-\overline\bu_h$ of the snapshots computed  in the first period as data set with the $L^2$ inner product (all other POD-ROM approximations gave similar results).
\begin{figure}
\centerline{\includegraphics[height=2.5truecm]{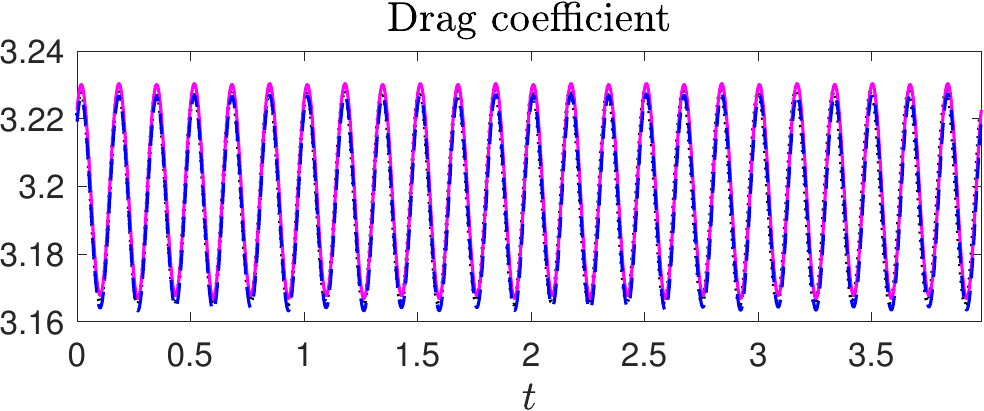}\hspace*{1em}
 \includegraphics[height=2.5truecm]{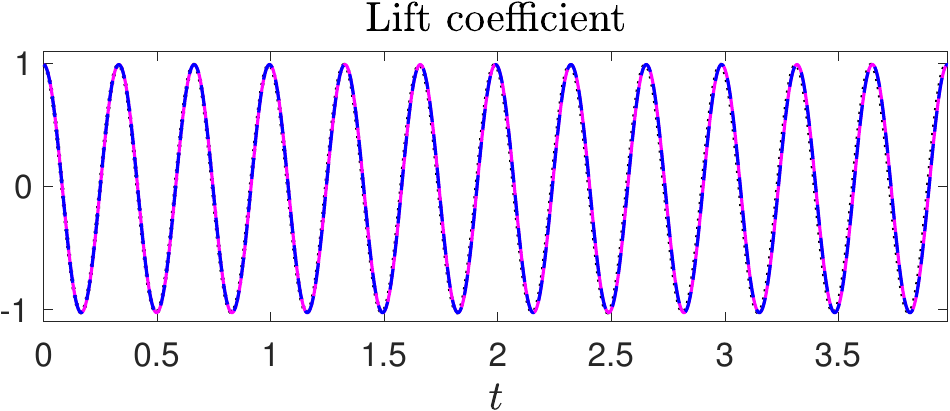}}
\caption{Drag (left) and lift (right) coefficients for the reference solution (black, {dotted line}), the FOM (magenta, {continuous line}), and the POD-ROM approximation with snapshots $\bu_h^j$ as data set and $L^2$ inner product {(blue, discontinuous line)}.
}\label{lifts-drags} 
\end{figure}
At first sight, the agreement is excellent. Fig.~\ref{detalles} compares the first and twelfth periods. There is a phase difference between the black {dotted} line and the other two lines, since, as commented above, the ``exact" solution computed on the refined grid has a slightly different period than that computed on the main grid.  We notice however, that the difference between the other two lines is hardly altered from the first period to the last one.
\begin{figure}
\centerline
{\includegraphics[height=2.5truecm]{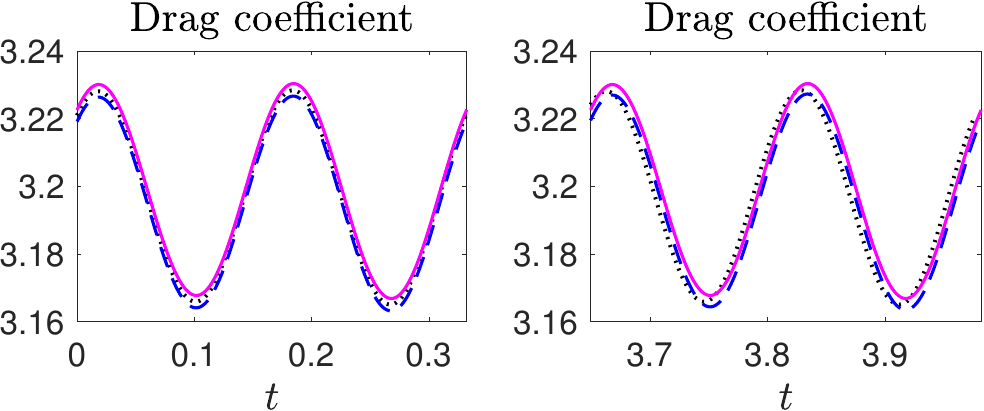}\hspace*{1em}
 \includegraphics[height=2.5truecm]{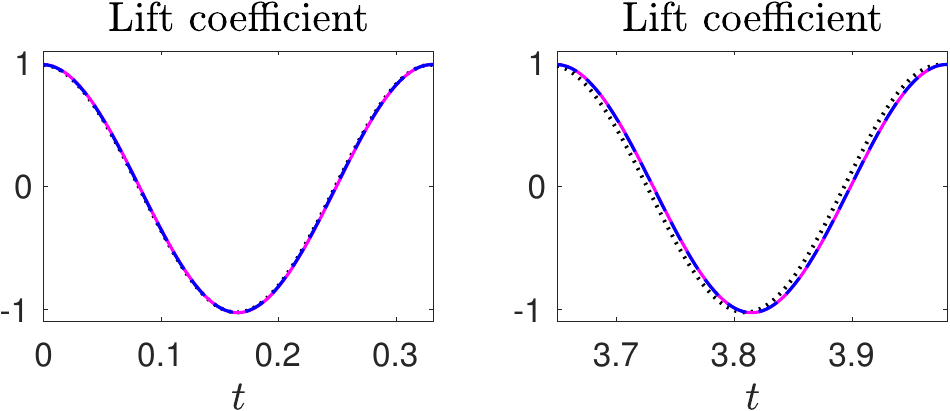}}
\caption{Drag (left) and lift (right) coefficients for the reference solution (black, {dotted line}), the FOM (magenta, {continuous line}), and the POD-ROM approximation with snapshots $\bu_h^j$ as data set and $L^2$ inner product {(blue, discontinuous line)}.
}\label{detalles} 
\end{figure}
In addition, the results for the lift coefficient corresponding to the FOM and the POD-ROM approximation are on top of each other (in fact the maximum relative error of this coefficient with respect to that of the FOM in the 12 periods is below $1.7\times 10^{-4}$). Some differences can be seen in the drag coefficient. However, the maximum relative error with respect the coefficient computed with the FOM  is below 0.0011.

The  summary of the results is that there are no significant differences in the POD-ROM approximation, i.e., 
they are quite independent of the data set used. Only some minor differences, {namely that} the projection errors 
depend on the data  set when the $H_0^1$ inner product is used, could be observed in the computation of the POD basis.

\section{Conclusions}
We analyzed reduced order models for the incompressible Na\-vier--Stokes equations based on proper orthogonal decomposition methods. The influence of including approximations to the time derivative in the set of snapshots
was studied. Our set of snapshots is
constituted by the approximation to the velocity at the initial time together with approximations to the time derivative at different times. The
approximation to the velocity at the initial time can be replaced by the mean value and 
then only approximations to the time derivatives are required
for applying the POD method to the fluctuations. The Galerkin time derivative can be replaced by any other approximation as the standard difference quotient. 

We studied the differences between projecting onto $L^2$ and ${H_0^1}$. We proved that including the Galerkin time derivatives (or the difference quotients) leads to pointwise estimates for both projections. In the $L^2$ case, 
error bounds can be proved in the no-time-derivatives case (with only snapshots for the velocity) as shown in the literature, e.g., see \cite{novo_rubino}. However, the time derivatives approach is
useful also in this case to get pointwise estimates. 
In  the numerical analysis, we utilized the projection of the continuous-in-time Galerkin approximation since this allows 
to use in practice any time integrator for computing the snapshots. It is easy to include the corresponding error in time in the
bounds of the present paper. Also, different times can be considered to compute the set of snapshots and the fully discrete POD approximations. Finally, as in one of the methods in \cite{novo_rubino}, we added grad-div stabilization to the approximations computed with the FOM and POD methods to be able to prove error bounds in which the constants do not depend on inverse powers of the viscosity.

In the numerical studies, we compared three different sets of snapshots, for both inner products, 
where the elements in the data sets were $\by_h^j=\bu_h^j-\overline \bu_h$, $\by_h^j=T\bu_{u,t}^j$, and $\by_h^j=T\delta_t\bu_h^j=T(\bu_h^j-\bu_h^{j-1})/\Delta t$. While the snapshots were computed using only one period,  we showed
that very good approximations are obtained to the lift and drag coefficients in a time interval of 12 periods. In our numerical studies there were no significant differences between the different procedures. 
More comprehensive studies have to further investigate this topic. 

Altogether, we think that the  rigorous numerical
analysis presented in this paper shows interesting properties and sharp bounds for the different methods. 
It also supports the
idea of the recent paper \cite{locke_singler} which shows that in the case of the heat equation
there is no need to include in the set of snapshots other than one approximation to the solution at a fixed time. We prove that for the approximation to the incompressible Navier--Stokes equations the same situation holds. Moreover, in case of applying the POD method to fluctuations, as it is standard, only snapshots of
the time derivative are needed.

\appendix\label{app}

\section{Robust bounds for the terms with the second temporal derivative}

In this section we derive bounds for the terms \eqref{second_dev_2} and \eqref{second_dev}. As a consequence, the terms on the right-hand sides of \eqref{eq:cota_finalSUPv} and \eqref{eq:error3_b_L2}
that contain  \eqref{second_dev} and  \eqref{second_dev_2}, respectively, are bounded. 

The constants in the bounds below do not blow up as $\nu\to 0$. It will be assumed that all functions are 
sufficiently smooth such that the performed operations are well defined.
{\begin{lemma}Let $\bu_h$ be the Galerkin approximation defined in \eqref{eq:gal_grad_div2} and
let $\bs_h^m$ be the modified Stokes projection defined in \eqref{stokespro_mod_def}. Let us denote by $
\be_h=\bu_h-\bs_h^m$. Assume for simplicity $\bu_h(0)=\bs_h^m(0)$. Then, the following bounds hold
\begin{eqnarray}\label{final_et}
\int_0^t\|\be_{h,t}(s)\|_0^2\ ds &\le& C h^{2(l-1)},\\
\label{eq:est_u_tt}
\int_0^t \|\be_{h,tt}(s)\|_0^2\ ds &\le& C h^{2(l-2)},\\
\label{seacabo}
\int_0^t \|\nabla(\be_{h,tt})(s)\|_0^2\ ds &\le& C h^{2(l-3)},
\end{eqnarray}
where the constant $C$ is independent of inverse powers of $\nu$.
\end{lemma}
}
\begin{proof}
 {We bound} 
\begin{equation}\label{eq:bu_h_tt_tria}
\|\bu_{h,tt}\|_0 \le \|\bu_{h,tt}-\bs^m_{h,tt}\|_0 + \|\bu_{tt}\|_0 + \|\bs^m_{h,tt}-\bu_{tt}\|_0 ,
\end{equation}
and likewise for the norm of the gradient. {Below, the first term on the right-hand side will be shown to be bounded.} 
The second term is bounded because the solution is sufficiently smooth, 
and then the last term can be bounded with \eqref{stokespro_mod}, where the 
notation $\bs^m_{h,tt}$ has to be understood that the modified Stokes projection is applied to $\bu_{tt}$. 

The error equation for {\eqref{eq:gal_grad_div2}} is given by 
\begin{eqnarray}\label{eq:gal_err}
\lefteqn{\hspace*{-8em}\left(\be_{h,t},\bvar_h\right)+\nu(\nabla \be_h,\nabla \bvar_h)+b_h(\bu_h,\bu_h,\bvar_h)-b_h(\bu,\bu,\bvar_h)+
\mu(\nabla \cdot\be_h,\nabla \cdot \bvar_h)}
\nonumber\\
&=&(\sigma_1,\bvar_h) +(\sigma_2,\nabla\cdot \bvar_h)
\quad\forall\ \bvar_h\in {\boldsymbol V}_h^l,
\end{eqnarray}
with 
$\sigma_1=\bu_t-\bs^m_{h,t}$, $\sigma_2=(p-P_{Q_h}p)+\mu(\nabla \cdot (\bu-\bs_h^m)$,  
and $P_{Q_h}p$ is the best approximation of $p$ in $Q_h^l$. Note that if a (discrete) velocity function is
in $\boldsymbol V$ (or ${\boldsymbol V}_h^l$), then also the temporal derivatives of this function are in 
the same space. Hence, taking $\bvar_h=\be_{h,t} \in {\boldsymbol V}_h^l$ in \eqref{eq:gal_err} 
and using the inverse inequality \eqref{inv} yields
\begin{eqnarray}\label{eq:app_00}
\lefteqn{\|\be_{h,t}\|_0^2+\frac{d}{dt}\frac{1}{2}\nu\|\nabla \be_h\|_0^2+\frac{d}{dt}\frac{1}{2}\mu\|\nabla \cdot  \be_h\|_0^2 \le|b_h(\bu_h-\bu,\bu,\be_{h,t})| } \nonumber\\
&&
+|b_h(\bu_h,\bu_h-\bu,\be_{h,t})| +\|\sigma_1\|_0\|\be_{h,t}\|_0+c_{\rm inv}\|\sigma_2\|_0 h^{-1}\|\be_{h,t}\|_0.
\end{eqnarray}
The first term on the right-hand side is estimated with H\"older's inequality, \eqref{eq:cota_grad_div}, and 
Young's inequality
\begin{eqnarray*}
|b_h(\bu_h-\bu,\bu,\be_{h,t})|&\le& \|\bu_h-\bu\|_0\|\nabla \bu\|_\infty\|\be_{h,t}\|_0
+\frac{1}{2}\|\nabla \cdot (\bu_h-\bu)\|_0\|\bu\|_\infty\|\be_{h,t}\|_0\\
&\le& C h^{2(l-1)}+\frac{1}{8}\|\be_{h,t}\|_0^2.
\end{eqnarray*}
For the second term, in addition the following estimate from \cite[Lemma~6.11]{John}
{
\[
((\bu\cdot\nabla) \bv,\bw)\le C\|\bu\|_1\|\nabla \bv\|_0\|\bw\|_1,\quad \bu,\bv,\bw \in H^1(\Omega)^d,
\]
}
the inverse inequality \eqref{inv}, and the condition $l\ge 2$ are 
utilized
\begin{eqnarray*}
|b_h(\bu_h,\bu_h-\bu,\be_{h,t})|&\le& |b_h(\bu_h-\bu,\bu_h-\bu,\be_{h,t})|+|b_h(\bu,\bu_h-\bu,\be_{h,t})|\\
&\le& C \|\bu_h-\bu\|_1^2\|\be_{h,t}\|_1+\|\bu\|_\infty\|\nabla (\bu_h-\bu)\|_0\|\be_{h,t}\|_0\\
&\le & C \|\bu_h-\bu\|_1^2 c_{\rm inv} h^{-1}\|\be_{h,t}\|_0+C \|\bu_h-\bu\|_1\|\be_{h,t}\|_0\\
&\le & C \left(h^{4l-6}+h^{2l-2}\right)+\frac{1}{8}\|\be_{h,t}\|_0^2\le C h^{2(l-1)}+\frac{1}{8}\|\be_{h,t}\|_0^2.
\end{eqnarray*}
For the last two terms, we obtain, with \eqref{stokespro_mod}, the $L^2(\Omega)$ best approximation 
error for the pressure, and Young's inequality, the following bound 
\begin{eqnarray*}
\|\sigma_1\|_0\|\be_{h,t}\|_0+c_{\rm inv}\|\sigma_2\|_0 h^{-1}\|\be_{h,t}\|_0
&\le& 2\|\sigma_1\|_0^2+c_{\rm inv}^2\|\sigma_2\|_0^2 h^{-2}+\frac{1}{4}\|\be_{h,t}\|_0^2\\
&\le&C h^{2(l-1)}+\frac{1}{4}\|\be_{h,t}\|_0^2.
\end{eqnarray*}
Absorbing terms in the left-hand side of \eqref{eq:app_00} and collecting the other terms on the right-hand 
side leads to 
\begin{equation}\label{eq:app_02}
\frac{1}{2}\left(\|\be_{h,t}\|_0^2+\frac{d}{dt}\nu\|\nabla \be_h\|_0^2+\frac{d}{dt}\mu\|\nabla \cdot  \be_h\|_0^2\right)
\le C h^{2(l-1)},
\end{equation}
{from which, taking into account $\be_h(0)=0$}, we derive the estimate
\[
\int_0^t\|\be_{h,t}(s)\|_0^2\ ds +\nu\|\nabla  \be_h(t)\|_0^2+\mu\|\nabla \cdot \be_h(t)\|_0^2\le C h^{2(l-1)}.
\]
so that \eqref{final_et} holds. 

Considering $t\to0$ in \eqref{eq:gal_err}, the viscous term and the grad-div stabilization term 
vanish, since $\be_h(0)=\boldsymbol 0$. {Following \cite{HR82} the initial pressure $p(0)$ is the 
solution of the problem
\begin{eqnarray*}
\Delta p(0)&=&\nabla \cdot \bff (0)-\nabla \cdot ((\bu^0\cdot \nabla) \bu^0) \quad \mbox{in } \Omega,\\
\frac{\partial p(0)}{\partial \bn}&=&(\nu \Delta \bu^0+\bff (0)-(\bu^0\cdot \nabla) \bu^0)\cdot \bn\quad \mbox{on }\partial \Omega,
\end{eqnarray*}
where $\bn$ is the outward pointing unit normal vector The above problem defines a unique pressure up to a constant.}
{Then, one can repeat the same analysis as above at time $t=0$ starting from \eqref{eq:app_00} (with only the
term $\|\be_{h,t}(0)\|_0^2$ on the left-hand side)}
and obtains, instead of \eqref{eq:app_02},
\begin{equation*}
\frac12 \|\be_{h,t}(0)\|_0^2 \le C h^{2(l-1)}.
\end{equation*}
Applying the inverse inequality yields
\begin{equation}\label{eq:app_03}
\|(\nabla\be_{h,t})(0)\|_0^2 \le C h^{2(l-2)}, \quad \|(\nabla\cdot \be_{h,t})(0)\|_0^2 \le C h^{2(l-2)},
\end{equation}
such that the norms on the left-hand sides are bounded for $l\ge 2$. 

Taking the time derivative of \eqref{eq:gal_err} gives
\begin{eqnarray}\label{eq:gal_err_t}
\lefteqn{
\left(\be_{h,tt},\bvar_h\right)+\nu(\nabla \be_{h,t},\nabla \bvar_h)+b_h(\bu_{h,t},\bu_h,\bvar_h)-b_h(\bu_{t},\bu,\bvar_h)}
\nonumber\\
&&+b_h(\bu_h,\bu_{h,t},\bvar_h)-b_h(\bu,\bu_{t},\bvar_h)+
\mu(\nabla \cdot\be_{h,t},\nabla \cdot \bvar_h)
\\
&=&(\sigma_{1,t},\bvar_h) +(\sigma_{2,t},\nabla\cdot \bvar_h)
\quad\forall\ \bvar_h\in {\boldsymbol V}_h^l,\nonumber
\end{eqnarray}
Choosing $\bvar_h=\be_{h,tt}$ and arguing as before leads to 
\begin{eqnarray}\label{eq:app_01}
\lefteqn{\|\be_{h,tt}\|_0^2+\frac{d}{dt}\frac{1}{2}\nu\|\nabla \be_{h,t}\|_0^2+\frac{d}{dt}\frac{1}{2}\mu\|\nabla \cdot  \be_{h,t}\|_0^2
}\nonumber\\
&\le &  \|\sigma_{1,t}\|_0^2+c_{\rm inv}^2h^{-2}\|\sigma_{2,t}\|_0^2
+\frac{1}{4}\|\be_{h,tt}\|_0^2+|b_h(\bu_{h,t}-\bu_{t},\bu,\be_{h,tt})|\nonumber\\
&&+|b_h(\bu_{h,t},\bu-\bu_h,\be_{h,tt})| +|b_h(\bu_h-\bu,\bu_{t},\be_{h,tt})|+|b_h(\bu_h,\bu_{t}-\bu_{h,t},\be_{h,tt})|. 
\end{eqnarray}
Let us observe that the bounds for $\|\sigma_{1,t}\|_0$ and $\|\sigma_{2,t}\|_0$ depend on the regularity 
of $\bu_{t}$, $\bu_{tt}$, and $p_t$, but all are bounded if the solution is sufficiently regular.
For the nonlinear terms, the same arguments as above are used. 
For the first term, we obtain 
\begin{eqnarray}\label{posiaca}
\lefteqn{
|b_h(\bu_{h,t}-\bu_{t},\bu,\be_{h,tt})|}\nonumber\\
&\le &\|\bu_{h,t}-\bu_{t}\|_0\|\nabla \bu\|_\infty\|\be_{h,tt}\|_0
+\frac{1}{2}\|\nabla \cdot (\bu_{h,t}-\bu_{t})\|_0\|\bu\|_\infty\|\be_{h,tt}\|_0\nonumber\\
&\le& C \|\bu_{t}-\bu_{h,t}\|_1^2+\frac{1}{16}\|\be_{h,tt}\|_0^2.
\end{eqnarray}
It follows from \eqref{stokespro_mod},  \eqref{final_et}, and the inverse inequality \eqref{inv} that 
\[
\int_0^t  \|\bu_{t}-\bu_{h,t}\|_1^2\le C h^{2(l-2)},
\]
so that absorbing the second term on the right-hand side of \eqref{posiaca} in  the left-hand side 
of \eqref{eq:app_01} and integrating in time, the corresponding 
first term on the right-hand side of \eqref{posiaca}  is bounded.
For the second nonlinear term on the right-hand side of \eqref{eq:app_01}, we obtain
\begin{eqnarray*}
|b_h(\bu_{h,t},\bu-\bu_h,\be_{h,tt})|&\le& C\|\bu_{h,t}\|_1\|\bu-\bu_h\|_1c_{\rm inv}h^{-1}\|\be_{h,tt}\|_0
\\&\le& C h^{2(l-2)}\|\bu_{h,t}\|_1^2+\frac{1}{16}\|\be_{h,tt}\|_0^2.
\end{eqnarray*}
Again, the last term on the right-hand side can be absorbed in the left-hand side of \eqref{eq:app_01} and the integral with 
respect to time of the first term is bounded. 
For the third nonlinear term we argue as for the second one to get 
\[
|b_h(\bu_h-\bu,\bu_{t},\be_{h,tt})|\le \|\bu_h-\bu\|_1 \|\bu_{t}\|_1 \|\be_{h,tt}\|_1
\le C h^{2(l-2)}\|\bu_{t}\|_1^2+\frac{1}{16}\|\be_{h,tt}\|_0^2.
\]
Using for the fourth nonlinear term the inverse inequality \eqref{inv} and \eqref{stokespro_mod}
gives
\begin{eqnarray*}
\lefteqn{
|b_h(\bu_h,\bu_{t}-\bu_{h,t},\be_{h,tt})| \le \|\bu_h\|_\infty \|\bu_{t}-\bu_{h,t}\|_1\|\be_{h,tt}\|_0 }\\
&& 
+\frac{1}{2}\|\nabla \cdot (\bu_h-\bs_h)\|_\infty\|\bu_{t}-\bu_{h,t}\|_0\|\be_{h,tt}\|_0
+\frac{1}{2}\|\nabla \cdot \bs_h\|_\infty\|\bu_{t}-\bu_{h,t}\|_0\|\be_{h,tt}\|_0\\
&\le& C \left(\|\bu_{t}-\bu_{h,t}\|_1^2+c_{\rm inv}^2h^{-d} h^{2(l-1)}\|\bu_{t}-\bu_{h,t}\|_0^2\right)+\frac{1}{16}\|\be_{h,tt}\|_0^2.
\end{eqnarray*}
Again, the last term on the right-hand side above is absorbed into the left-hand side of \eqref{eq:app_01},
the integral of the first term is bounded, and the integral for the second is
\[
c_{\rm inv}^2h^{-d} h^{2(l-1)}\int_0^t\|\bu_{t}-\bu_{h,t}(s)\|_0^2\ ds \le C h^{4l-4-d},
\]
which is bounded since $l\ge2$. Collecting all error bounds and taking \eqref{eq:app_03} into account, we conclude 
\eqref{eq:est_u_tt} where the constant does not depend explicitly on inverse powers of $\nu$. The inverse inequality gives \eqref{seacabo}.
\end{proof}

Estimate \eqref{eq:est_u_tt} can be applied, in combination with \eqref{eq:bu_h_tt_tria}, in \eqref{eq:error3_b_L2}. To bound \eqref{second_dev}, which is the term appearing in \eqref{eq:cota_finalSUPv}, we apply \eqref{seacabo}
such that a robust estimate is proved only for pairs of finite element spaces with $l\ge3$.

\bibliographystyle{siamplain}
\bibliography{references}
\end{document}